\newtheorem{theo}{Theorem}
\newtheorem{defin}[theo]{Definition}
\newtheorem{lemma}[theo]{Lemma}
\newtheorem{propo}[theo]{Proposition}
\newtheorem{coro}[theo]{Corollary}
\newenvironment{proof}{\emph{Proof:}$\\$}{$\\\Box\\$}
\newcommand\dom{\texttt{Dom}}
\newcommand\tuple{\vec}
\newcommand{\indep}[2]{#1 ~\bot~ #2}
\newcommand{\indepc}[3]{#1 ~\bot_{#3}~ #2}
\newcommand{\nindepc}[3]{#1 ~\not \!\!\bot_{#3}~ #2}
\newcommand {\parts}{\mathcal P}
\newcommand \all{\texttt{All}}
\newcommand \nonempty{\texttt{NE}}
\newcommand{\ESO}{\text{ESO}}
\newcommand{\FO}{\text{FO}}
\newcommand{\D}{\mathcal D}
\newcommand{\height}{\texttt{ht}}
\title{Upwards Closed Dependencies in Team Semantics}
\author{Pietro Galliani\thanks{Research supported by Grant 264917 of the Academy of Finland.}
\institute{Department of Mathematics and Statistics\\Helsinki, Finland}
\email{pgallian@gmail.com}}
\begin{document}
\maketitle

\begin{abstract}
We prove that adding upwards closed first-order dependency atoms to first-order logic with team semantics does not increase its expressive power (with respect to sentences), and that the same remains true if we also add constancy atoms. As a consequence, the negations of functional dependence, conditional independence, inclusion and exclusion atoms can all be added to first-order logic without increasing its expressive power.

Furthermore, we define a class of bounded upwards closed dependencies and we prove that unbounded dependencies cannot be defined in terms of bounded ones. 
\end{abstract}

\section{Introduction}
Team semantics is a generalization of Tarski's semantics in which formulas are satisfied or not satisfied by sets of assignments, called \emph{teams}, rather than by single assignments. It was originally developed by Hodges, in \cite{hodges97}, as a compositional alternative to the imperfect-information \emph{game theoretic semantics} for independence friendly logic \cite{hintikkasandu89,mann11}.

Over the past few years team semantics has been used to specify and study many other extensions of first-order logic. In particular, since a team describes a relation between the elements of its model team semantics offers a natural way to add to first-order logic atoms corresponding to database-theoretic \emph{dependency notions}.

This line of thought led first to the development of \emph{dependence logic} \cite{vaananen07}, and later to that of \emph{independence logic} \cite{gradel13} and \emph{inclusion and exclusion logics} \cite{galliani12}.\footnote{The literature contains many other extensions of first-order logic with team semantics, but we do not examine them in this work.}  By now there are many results in the literature concerning the properties of these logics, and in Section \ref{sect:prelims} we recall some of the principal ones.

One common characteristic of all these logics is that they are much stronger than first-order logic proper, even though they merely add \emph{first-order definable} dependency conditions to its language. 
Indeed, the rules of team semantics straddle the line between first and second order, since they evaluate first-order connectives by means of second-order machinery: and, while in the case of first-order logic formulas team semantics can be reduced to Tarski's semantics, if we add to our language atoms corresponding to further conditions the second-order nature of team semantics can take over.

The purpose of the present paper is to investigate the boundary between first and second order ``from below'', so to say, taking first-order logic with team semantics and trying to find out how much we can add to it while preserving first-orderness. In Section \ref{sect:upwards} we define a fairly general family of classes of first-order definable dependency conditions and prove they can be safely added to first-order logic; then in Section \ref{sect:upwards_const} we expand this family, and in Section \ref{sect:negdep} we show that, as a consequence, the negations of all the main dependency atoms studied in team semantics do not ``blow up'' first-order logic into a higher order one. Finally, in Section \ref{sect:bounded} we introduce a notion of \emph{boundedness} for dependencies and use it to demonstrate some \emph{non-definability results}.
\section{Preliminaries}
\label{sect:prelims}
In this section we will recall some fundamental definitions and results concerning team semantics.
\begin{defin}[Team]
Let $M$ be a first-order model and let $\dom(M)$ be the set of its elements.\footnote{We always assume that models have at least two elements in their domain.} Furthermore, let $V$ be a finite set of variables. Then a \emph{team} $X$ over $M$ with \emph{domain} $\dom(X) = V$ is a set of assignments $s$ from $V$ to $\dom(M)$.

Given a team $X$ and a tuple of variables $\tuple v$ contained in the domain of $X$, we write $X \upharpoonright \tuple v$ for the team obtained by restricting all assignments of $X$ to the variables of $\tuple v$ and $X(\tuple v)$ for the relation $\{s(\tuple v) : s \in X\} \subseteq \dom(M)^{|\tuple v|}$.
\end{defin}
As it is common when working with team semantics, we will assume that all our expressions are in negation normal form.
\begin{defin}[Team Semantics for First-Order Logic]
\label{def:TS}
Let $\phi(\tuple x)$ be a first-order formula in negation normal form with free variables in $\tuple x$. Furthermore, let $M$ be a first-order model whose signature contains the signature of $\phi$ and let $X$ be a team over it whose domain contains $\tuple x$. Then we say that $X$ \emph{satisfies} $\phi$ in $M$, and we write $M \models_X \phi$, if and only if this follows from these rules:\footnote{What we give here is the so-called \emph{lax} version of team semantics. There also exists a \emph{strict} version, with slightly different rules for disjunction and existential quantification; but as pointed out in \cite{galliani12}, \emph{locality} -- in the sense of Theorem \ref{thm:local} here -- fails in strict team semantics for some of the logics we are interested in. Therefore, in this work we will only deal with lax team semantics.}
\begin{description}
\item[TS-lit:] For all first-order literals $\alpha$, $M \models_X \alpha$ if and only if for all $s \in X$, $M \models_s \alpha$ according to the usual Tarski semantics; 
\item[TS-$\vee$:] For all $\psi$ and $\theta$, $M \models_X \psi \vee \theta$ if and only if $X = Y \cup Z$ for two subteams $Y$ and $Z$ such that $M \models_{Y} \psi$ and $M \models_Z \theta$; 
\item[TS-$\wedge$:] For all $\psi$ and $\theta$, $M \models_X \psi \wedge \theta$ if and only if $M \models_X \psi$ and $M \models_X \theta$; 
\item[TS-$\exists$:] For all $\psi$ and all variables $v$, $M \models_X \exists v \psi$ if and only if there exists a function 
\[
	H: X \rightarrow \parts(\dom(M)) \backslash \{\emptyset\}
\] 
such that $M \models_{X[H/v]} \psi$, where $X[H/v] = \{s[m/v] : s \in X, m \in H(s)\}$ and $\parts(\dom(M))$ is the powerset of $\dom(M)$;
\item[TS-$\forall$:] For all $\psi$ and all variables $v$, $M \models_X \forall v \psi$ if and only if $M \models_{X[M/v]} \psi$, where $X[M/v] = \{s[m/v] : s \in X, m \in M\}$.
\end{description}

Given a sentence (that is, a formula with no free variables) $\phi$ and a model $M$ over its signature, we say that $\phi$ is \emph{true} in $M$ and we write $M \models \phi$ if and only if $M \models_{{\{\emptyset\}}} \phi$.\footnote{Of course, one should not confuse the team $\{\emptyset\}$, which contains only the empty assignment, with the \emph{empty team} $\emptyset$, which contains no assignments at all.}
\end{defin}
The following is a useful and easily derived rule:
\begin{lemma}
Let $\tuple v = v_1 \ldots v_n$ be a tuple of $n$ variables and let $\exists \tuple v \psi$ be a shorthand for $\exists v_1 \ldots \exists v_n \psi$. Then $M \models_X \exists \tuple v \psi$ if and only if there exists a function $H: X \rightarrow \parts(\dom(M)^n) \backslash \{\emptyset\}$ such that $M \models_{X[H/\tuple v]} \psi$, where $X[H/\tuple v] = \{s[\tuple m / \tuple v] : s \in X, \tuple m \in H(s)\}$.
\end{lemma}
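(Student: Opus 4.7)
The plan is to proceed by induction on the length $n$ of the tuple $\tuple v = v_1 \ldots v_n$. The base case $n=1$ is immediate, since $\exists \tuple v \psi$ is literally $\exists v_1 \psi$ and the statement reduces to the clause \textbf{TS-$\exists$} of Definition~\ref{def:TS}, with $\parts(\dom(M)^1) = \parts(\dom(M))$.

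For the inductive step, I would write $\exists \tuple v \psi$ as $\exists v_1 (\exists \tuple v' \psi)$ where $\tuple v' = v_2 \ldots v_n$, and use \textbf{TS-$\exists$} together with the induction hypothesis to decompose the witness. In the left-to-right direction, $M \models_X \exists v_1 (\exists \tuple v' \psi)$ gives a function $H_1 : X \to \parts(\dom(M)) \setminus \{\emptyset\}$ with $M \models_{X[H_1/v_1]} \exists \tuple v' \psi$, and then the induction hypothesis yields $H' : X[H_1/v_1] \to \parts(\dom(M)^{n-1}) \setminus \{\emptyset\}$ with $M \models_{X[H_1/v_1][H'/\tuple v']} \psi$. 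I would then define
\[
	H(s) = \{(m_1, \tuple m') : m_1 \in H_1(s), \; \tuple m' \in H'(s[m_1/v_1])\}
\]
for each $s \in X$. Non-emptiness of $H(s)$ follows from non-emptiness of $H_1(s)$ and of each $H'(s[m_1/v_1])$, and a direct set-equality check shows $X[H/\tuple v] = X[H_1/v_1][H'/\tuple v']$, so $\psi$ holds on it.

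For the right-to-left direction, given $H : X \to \parts(\dom(M)^n) \setminus \{\emptyset\}$ with $M \models_{X[H/\tuple v]} \psi$, I would recover $H_1$ and $H'$ by projection and slicing: set $H_1(s) = \{m_1 : \exists \tuple m' \; (m_1, \tuple m') \in H(s)\}$ and, for each $s' = s[m_1/v_1] \in X[H_1/v_1]$, set $H'(s') = \{\tuple m' : (m_1, \tuple m') \in H(s)\}$. Both are non-empty because $H(s)$ is, and again $X[H_1/v_1][H'/\tuple v'] = X[H/\tuple v]$, so the induction hypothesis applies to give $M \models_{X[H_1/v_1]} \exists \tuple v' \psi$ and hence $M \models_X \exists \tuple v \psi$.

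The only subtlety worth flagging is that when defining $H'$ from $H$, a single assignment $s' \in X[H_1/v_1]$ may arise from different $s \in X$ sharing a value $m_1$, so one must verify that $H'$ is well-defined as a function on the enlarged team; this is handled by indexing slices through $s$ rather than through $s'$, or equivalently by taking the union over all preimages, which preserves non-emptiness. Beyond this bookkeeping, the argument is a routine unwinding of the semantics.
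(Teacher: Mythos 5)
Your proof is correct: the induction on $n$, with the product-style construction of $H$ from $H_1, H'$ in one direction and the projection/union-of-slices construction in the other (which properly handles the well-definedness issue when distinct $s \in X$ collapse to the same $s[m_1/v_1]$), is exactly the routine unwinding intended. The paper itself states this lemma without proof, calling it "easily derived," and your argument is the standard derivation it alludes to.
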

With respect to first-order formulas, team semantics can be reduced to Tarski's semantics. Indeed,
\begin{propo}[\cite{hodges97,vaananen07}]
\label{propo:flat}
Let $\phi(\tuple x)$ be a first-order formula in negation normal form with free variables in $\tuple x$. Furthermore, let $M$ be a first-order model whose signature contains that of $\phi$, and let $X$ be a team over $M$ whose domain contains $\tuple x$. Then $M \models_X \phi$ if and only if, for all $s \in X$, $M \models_s \phi$ with respect to Tarski's semantics. 

In particular, a first-order sentence $\phi$ is true in a model $M$ with respect to team semantics if and only if it is true in $M$ with respect to Tarski's semantics.
\end{propo}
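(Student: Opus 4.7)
The plan is to prove this by structural induction on the NNF formula $\phi$, showing at each step that the team-semantic clause reduces precisely to the pointwise Tarskian condition. The base case is literals, where the equivalence is immediate: TS-lit is \emph{defined} as "every $s\in X$ satisfies $\alpha$ in Tarski's sense". The conjunction case is equally direct, since both team and Tarski semantics distribute $\wedge$ over assignments without any reshuffling.

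The two interesting cases are disjunction and the existential quantifier, because team semantics introduces a layer of "second-order" choice (a splitting, or a choice function $H$) that has to be matched with a pointwise statement. For $\psi\vee\theta$: in the forward direction, assume $X=Y\cup Z$ with $M\models_Y\psi$ and $M\models_Z\theta$; by induction each $s\in Y$ satisfies $\psi$ and each $s\in Z$ satisfies $\theta$ in Tarski's sense, so every $s\in X$ satisfies $\psi\vee\theta$. Conversely, if every $s\in X$ satisfies $\psi$ or $\theta$, set $Y=\{s\in X: M\models_s\psi\}$ and $Z=X\setminus Y$ (so $M\models_s\theta$ for $s\in Z$); by induction $M\models_Y\psi$ and $M\models_Z\theta$, hence $M\models_X\psi\vee\theta$. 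For $\exists v\,\psi$: in the forward direction, pick any witnessing $H$; since $H(s)\neq\emptyset$, by induction every $s[m/v]$ with $m\in H(s)$ satisfies $\psi$ Tarskian-wise, so in particular every $s\in X$ has a witness $m$ with $s[m/v]\models_s\psi$. Conversely, given that every $s\in X$ has some witness $m_s$, define $H(s)=\{m_s\}$; then $X[H/v]$ consists of assignments each satisfying $\psi$, so by induction $M\models_{X[H/v]}\psi$ and hence $M\models_X\exists v\,\psi$.

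The universal case is the smoothest: $X[M/v]$ is exactly the set of all extensions $s[m/v]$ for $s\in X$ and $m\in\dom(M)$, and by induction $M\models_{X[M/v]}\psi$ is equivalent to $M\models_{s[m/v]}\psi$ for every such pair, which is precisely the pointwise Tarskian reading of $\forall v\,\psi$ on $X$. Finally, for the "in particular" clause about sentences, apply the main statement to $X=\{\emptyset\}$: $M\models\phi$ iff $M\models_{\{\emptyset\}}\phi$ iff $M\models_\emptyset\phi$ in Tarski's sense, i.e. $\phi$ is true in $M$.

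I do not expect any real obstacle. The only point to be careful about is the choice of witnesses in the $\exists$ case (implicitly using a choice principle to pick one $m_s$ per $s$) and the decision in the $\vee$ case to resolve each assignment into the $\psi$-side or the $\theta$-side; both are standard and already built into the usual flatness proof in \cite{hodges97,vaananen07}.
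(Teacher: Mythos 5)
Your proof is correct and is exactly the standard flatness argument by structural induction that the paper cites from \cite{hodges97,vaananen07} rather than reproving; all the case analyses (including the witness-selection in the $\exists$ case and the canonical split in the $\vee$ case) are handled properly. The only blemish is the typo ``$s[m/v]\models_s\psi$'' where you clearly mean $M\models_{s[m/v]}\psi$.
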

Therefore, not all first-order definable properties of relations correspond to the satisfaction conditions of first-order formulas: for example, the non-emptiness of a relation $R$ is definable by $\exists \tuple x R \tuple x$, but there is no first order $\phi$ such that $M \models_X \phi$ if and only if $X \not = \emptyset$. More in general, let $\phi^*(R)$ be a first-order sentence specifying a property of the $k$-ary relation $R$ and let $\tuple x = x_1 \ldots x_k$ be a tuple of new variables: then, as it follows easily from the above proposition, there exists a first-order formula $\phi(\tuple x)$ such that 
\[
	M \models_X \phi(\tuple x) \Leftrightarrow M, X(\tuple x) \models \phi^*(R)
\]
if and only if $\phi^*(R)$ can be put in the form $\forall \tuple x (R \tuple x \rightarrow \theta(\tuple x))$ for some $\theta$ in which $R$ does not occur.\footnote{That is, according to the terminology of \cite{vaananen07}, if and only if $\phi^*(R)$ is \emph{flat}.}
%

It is hence possible to extend first-order logic (with team semantics) by introducing new atoms corresponding to further properties of relations. Database theory is a most natural choice as a source for such properties; and, in the rest of this section, we will recall the fundamental database-theoretic extensions of first-order logic with team semantics and some of their properties.

\textbf{Dependence logic} $\FO(=\!\!(\cdot, \cdot))$, from \cite{vaananen07}, adds to first-order logic \emph{functional dependence atoms} \\$=\!\!(\tuple x, \tuple y)$ based on database-theoretic \emph{functional dependencies} (\cite{armstrong74}). Their rule in team semantics is
\begin{description}
\item[TS-fdep:] $M \models_X =\!\!(\tuple x, \tuple y)$ if and only if for all $s, s' \in X$,  $s(\tuple x) = s'(\tuple x) \Rightarrow s(\tuple y) = s'(\tuple y)$.
\end{description}
This atom, and dependence logic as a whole, is \emph{downwards closed}: for all dependence logic formulas $\phi$, models $M$ and teams $X$, if $M \models_X \phi$ then $M \models_Y \phi$ for all $Y \subseteq X$. It is not however \emph{union closed}: if $M \models_X \phi$ and $M \models_Y \phi$ then we cannot in general conclude that $M \models_{X \cup Y} \phi$.

Dependence logic is equivalent to existential second-order logic over sentences:
\begin{theo}[\cite{vaananen07}]
Every dependence logic sentence $\phi$ is logically equivalent to some $\ESO$ sentence $\phi^*$, and vice versa.
\end{theo}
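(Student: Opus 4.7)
The plan is to prove both directions. For the $\FO(=\!\!(\cdot,\cdot)) \subseteq \ESO$ direction, I would proceed by induction on the formula, assigning to each dependence logic formula $\phi(\tuple x)$ with free variables $\tuple x$ an $\ESO$ formula $\Phi_\phi(R)$ with a single free $|\tuple x|$-ary relation variable $R$, such that for every model $M$ and every nonempty team $X$ with $\dom(X) \supseteq \tuple x$, we have $M \models_X \phi$ iff $(M, X(\tuple x)) \models \Phi_\phi(R)$. The literal case is first-order by the flatness property from Proposition \ref{propo:flat}, and the functional dependence atom $=\!\!(\tuple y, \tuple z)$ is translated by the first-order formula expressing that $\tuple y$ functionally determines $\tuple z$ in $R$. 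Conjunction becomes $\Phi_\psi(R) \wedge \Phi_\theta(R)$, and disjunction $\psi \vee \theta$ is encoded by the $\ESO$ formula $\exists S \exists T\, (\forall \tuple x (R\tuple x \leftrightarrow S\tuple x \vee T\tuple x) \wedge \Phi_\psi(S) \wedge \Phi_\theta(T))$. The universal quantifier becomes $\Phi_\psi(R')$ where $R'\tuple x v$ abbreviates $R\tuple x$; for the existential quantifier $\exists v \psi$, one existentially quantifies a fresh relation $R'$ and asserts that the projection of $R'$ onto $\tuple x$ equals $R$, that every tuple of $R$ extends to at least one tuple of $R'$, and that $\Phi_\psi(R')$ holds. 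For a sentence, $\tuple x$ is empty and $R$ is a $0$-ary relation; the $\{\emptyset\}$ team corresponds to ``true'' for $R$, and one obtains an $\ESO$ sentence equivalent to $\phi$.

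For the converse $\ESO \subseteq \FO(=\!\!(\cdot,\cdot))$ direction, I would invoke the fact that every $\ESO$ sentence is equivalent to one in Skolem normal form
\[
\exists f_1 \ldots \exists f_k\, \forall \tuple x\, \psi(\tuple x, f_1(\tuple x_1), \ldots, f_k(\tuple x_k)),
\]
where $\psi$ is quantifier-free first-order and each $\tuple x_i \subseteq \tuple x$. Each Skolem function $f_i$ is then eliminated by introducing a fresh variable $y_i$ guarded by the dependence atom $=\!\!(\tuple x_i, y_i)$, yielding the dependence logic sentence
\[
\forall \tuple x\, \exists y_1 \ldots \exists y_k\, \bigl( =\!\!(\tuple x_1, y_1) \wedge \ldots \wedge =\!\!(\tuple x_k, y_k) \wedge \psi(\tuple x, y_1, \ldots, y_k) \bigr).
\]
The dependence atoms force each $y_i$ to be a function of $\tuple x_i$ across the team generated by the universal quantifier, exactly reproducing the behavior of the Skolem functions.

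The main obstacle is the existential quantifier case of the first direction: the rule \textbf{TS-$\exists$} uses a choice function $H$ into \emph{nonempty} subsets of $\dom(M)$, so the $\ESO$ translation must simultaneously assert that the projection of $R'$ onto $\tuple x$ covers all of $R$ (every $s \in X$ receives at least one value of $v$) and that no extra $\tuple x$-tuples appear (lax semantics allows $H(s)$ to be any nonempty set). Getting the direction of these implications right, and checking that the induction hypothesis transfers correctly to the extended team $X[H/v]$ represented as the relation $R'$, is the delicate step; once this is in place, the remaining connectives and the Skolem-normal-form translation go through routinely.
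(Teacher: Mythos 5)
Your proposal is correct and follows essentially the standard argument: the paper itself does not prove this theorem but cites \cite{vaananen07}, whose proof is exactly your two-step scheme — a compositional translation of each dependence logic formula $\phi(\tuple x)$ into an $\ESO$ formula $\Phi_\phi(R)$ describing the team relation $X(\tuple x)$ (with the existential-quantifier/disjunction cases handled by quantifying auxiliary relations as you do), and, conversely, bringing an $\ESO$ sentence into Skolem normal form and replacing each Skolem function $f_i(\tuple x_i)$ by an existentially quantified variable guarded by $=\!\!(\tuple x_i, y_i)$. The only routine details left implicit are the treatment of the empty team (your translation does make $\Phi_\phi(\emptyset)$ true, as needed when a disjunctive split is trivial) and unnesting of Skolem terms, neither of which is a genuine gap.
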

\textbf{Constancy logic} $\FO(=\!\!(\cdot))$ is the fragment of dependence logic which only allows functional dependence atoms of the form $=\!\!(\emptyset, \tuple x)$, which we will abbreviate as $=\!\!(\tuple x)$ and call \emph{constancy atoms}. Clearly we have that 
\begin{description}
\item[TS-const:] $M \models_X =\!\!(\tuple x)$ if and only if for all $s, s' \in X$, $s(\tuple x) = s'(\tuple x)$. 
\end{description}
As proved in \cite{galliani12}, every constancy logic sentence is equivalent to some first-order sentence: therefore, constancy logic is strictly weaker than dependence logic. Nonetheless, constancy logic is more expressive than first-order logic with respect to the second-order relations generated by the satisfaction conditions of formulas: indeed, it is an easy consequence of Proposition \ref{propo:flat} that no first-order formula is logically equivalent to the constancy atom $=\!\!(x)$. 

\textbf{Exclusion logic} $\FO(|)$, from \cite{galliani12}, adds to first-order logic \emph{exclusion atoms} $\tuple x ~|~ \tuple y$, where $\tuple x$ and $\tuple y$ are tuples of variables of the same length. Just as functional dependence atoms correspond to functional database-theoretic dependencies, exclusion atoms correspond to \emph{exclusion dependencies} \cite{casanova83}; and their satisfaction rule is
\begin{description}
\item[TS-excl:] $M \models_X \tuple x ~|~ \tuple y$ if and only if $X(\tuple x) \cap X(\tuple y) = \emptyset$.
\end{description}
As proved in \cite{galliani12}, exclusion logic is entirely equivalent to dependence logic: every exclusion logic formula is logically equivalent to some dependence logic formula, and vice versa. 

\textbf{Inclusion logic} $\FO(\subseteq)$, also from \cite{galliani12}, adds instead to first-order logic \emph{inclusion atoms} $\tuple x \subseteq \tuple y$ based on database-theoretic \emph{inclusion dependencies} \cite{fagin81}. The corresponding rule is 
\begin{description}
\item[TS-inc:] $M \models_X \tuple x \subseteq \tuple y$ if and only if $X(\tuple x) \subseteq X(\tuple y)$. 
\end{description}
Inclusion logic is stronger than first-order logic, but weaker than existential second-order logic: indeed, as shown in \cite{gallhella13}, sentence-wise it is equivalent to positive greatest fixed point logic GFP$^+$. Formula-wise, it is incomparable with constancy, dependence or exclusion logic, since its formulas are union closed but not downwards closed.

\textbf{Independence logic} $\FO(\bot)$, from \cite{gradel13}, adds to first-order logic \emph{independence atoms} $\indep{\tuple x}{\tuple y}$ with the intended meaning of ``the values of $\tuple x$ and $\tuple y$ are informationally independent''. More formally, 
\begin{description}
\item[TS-ind:] $M \models_X \indep{\tuple x}{\tuple y}$ if and only if $X(\tuple x \tuple y) = X(\tuple x) \times X(\tuple y)$. 
\end{description}
This notion of informational independence has a long history: see for example \cite{geiger91} for an analysis of this concept from a probabilistic perspective.

The \emph{conditional} independence atoms $\indepc{\tuple x}{\tuple y}{\tuple z}$, also from \cite{gradel13}, relativize the independence of $\tuple x$ and $\tuple y$ to all \emph{fixed} value of $\tuple z$. Their semantics is
\begin{description}
\item[TS-c-ind:] $M \models_X \indepc{\tuple x}{\tuple y}{\tuple z}$ if and only if for all tuples $\tuple m \in \dom(M)^{|\tuple z|}$ and for $X_{\tuple z = \tuple m} = \{s \in X : s(\tuple z) = \tuple m\}$ it holds that $X_{\tuple z = \tuple m}(\tuple x \tuple y) = X_{\tuple z = \tuple m}(\tuple x) \times X_{\tuple z = \tuple m}(\tuple y)$.
\end{description}
As pointed out in \cite{engstrom12}, the rule for $\indepc{\tuple x}{\tuple y}{\tuple z}$ corresponds precisely to the database-theoretic \emph{embedded multivalued dependency} \cite{fagin77} $(\tuple z \twoheadrightarrow \tuple x | \tuple y)$.

In \cite{gradel13} it was shown that every dependence logic formula is equivalent to some $FO(\bot_c)$ (conditional independence logic) formula, but not vice versa; and sentence-wise, both of these logics are equivalent to each other (and to $\ESO$). Furthermore, in \cite{galliani12} it was proved that $\FO(\bot_c)$ is equivalent to \textbf{inclusion/exclusion logic}\footnote{That is, to first-order logic plus inclusion \emph{and} exclusion atoms.} $\FO(\subseteq, |)$, even with respect to open formulas, and that this is, roughly speaking, the most general logic obtainable by adding first-order (or even existential second-order) definable dependency conditions to first-order logic.\footnote{To be more precise, for every $\ESO$ formula $\phi^*(R)$ there exists a $\FO(\bot_c)$ formula $\phi(\tuple x)$ such that, for all suitable models $M$ and nonempty teams $X$, $M \models_X \phi(\tuple x)$ if and only if $M, X(\tuple x) \models \phi^*(R)$.} More recently, in \cite{vaananen13}, it was shown that $\FO(\bot)$ and $\FO(\bot_c)$ are also equivalent.

We conclude this section with Figure \ref{fig1}, which depicts the relations between the logics we discussed so far.
\begin{figure}
\begin{center}
\epsfig{file=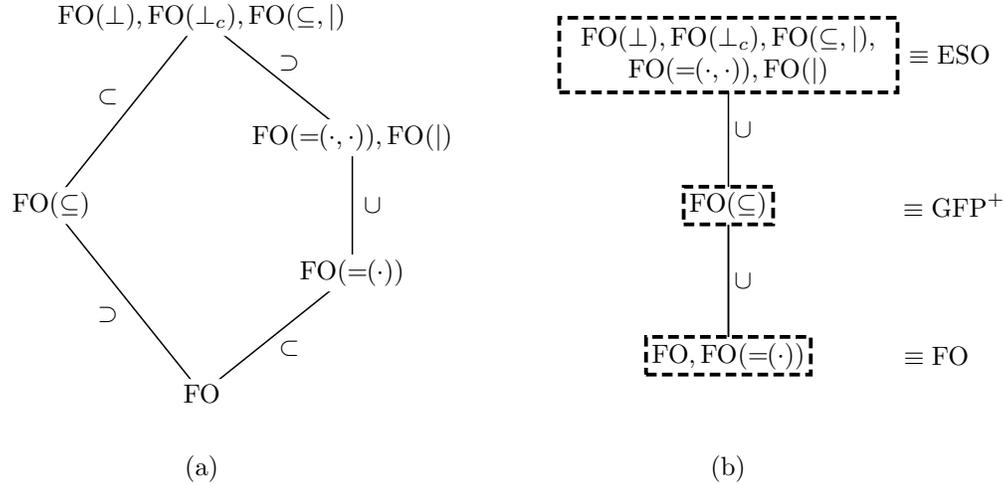}
\end{center}
\caption{Relations between logics wrt formulas (a) and sentences (b).}
\label{fig1}
\end{figure}
\section{Upwards Closed Dependencies}
\label{sect:upwards}
In this work we will study the properties of the logics obtained by adding families of \emph{dependency conditions} to the language of first-order logic. But what is a dependency condition, in a general sense? The following definition is based on the \emph{generalized atoms} of \cite{kuusisto13}:
\begin{defin}
Let $n \in \mathbb N$. A \emph{dependency} of arity $n$ is a class $\mathbf D$, closed under isomorphisms, of models over the signature $\{R\}$ where $R$ is a $n$-ary relation symbol. If $\tuple x$ is a tuple of $n$ variables (possibly with repetitions), $M$ is a first-order model and $X$ is a team over it whose domain contains all variables of $\tuple x$ then
\begin{description}
\item[TS-D:] $M \models_X \mathbf D \tuple x$ if and only if $(\dom(M), X(\tuple x)) \in \mathbf D$. 
\end{description}
\end{defin}
\begin{defin}
Let $\D = \{\mathbf D_1, \mathbf D_2, \ldots\}$ be a family of dependencies. Then we write $\FO(\D)$ for the logic obtained by adding to the language of first-order logic all \emph{dependency atoms} $\mathbf D \tuple x$, where $\mathbf D \in \D$ and $\tuple x$ is a tuple of variables of the arity of $\mathbf D$.
\end{defin}
It is not difficult to represent the logics of Section \ref{sect:prelims} in this notation. For example, dependence logic is $\FO(=\!\!(\cdot, \cdot))$ for $=\!\!(\cdot, \cdot) = \{=\!\!(n, m) : n, m \in \mathbb N\}$, where $(\dom(M), R) \in ~=\!\!(n,m)$ if and only if 
\[
\tuple a\tuple b, \tuple a\tuple c \in R \Rightarrow \tuple b = \tuple c
\]
for all tuples of elements $\tuple a = a_1 \ldots a_n$, $\tuple b = b_1 \ldots b_m$, $\tuple c = c_1 \ldots c_m \in \dom(M)$.

The following property can be easily verified, by induction on the formulas $\phi$:\footnote{For the sake of reference, we mention Theorem 4.22 of \cite{galliani12} in which the same result is proved in detail for (conditional) independence logic. The only new case here is the one in which $\phi(\tuple x) = \mathbf D \tuple y$ for some $\mathbf D \in \D$ and $\tuple y$ is contained in $\tuple x$; and for it the result follows at once from condition \textbf{TS-D} and from the fact that $X(\tuple y) = (X \upharpoonright \tuple x)(\tuple y)$.}
\begin{theo}[Locality]
\label{thm:local}
Let $\D$ be a family of dependencies and let $\phi(\tuple x)$ be a formula of $\FO(\D)$ with free variables in $\tuple x$. Then for all models $M$ and all teams $X$ over it whose domain contains $\tuple x$, $M \models_X \phi(\tuple x)$ if and only if $M \models_{X\upharpoonright \tuple x} \phi(\tuple x)$.
\end{theo}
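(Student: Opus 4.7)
The plan is to proceed by induction on the structure of $\phi$, showing at each step that the truth value of a formula over $X$ depends only on its restriction to its free variables. Let $\tuple x$ contain the free variables of $\phi$ and write $Y = X \upharpoonright \tuple x$.

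For the base cases, first-order literals $\alpha$ only mention variables in $\tuple x$, so $M \models_s \alpha$ depends only on $s \upharpoonright \tuple x$, and hence $M \models_X \alpha \iff M \models_Y \alpha$. For a dependency atom $\mathbf{D}\tuple y$ with $\tuple y$ contained in $\tuple x$, condition \textbf{TS-D} gives $M \models_X \mathbf{D}\tuple y \iff (\dom(M), X(\tuple y)) \in \mathbf{D}$, and since $X(\tuple y) = Y(\tuple y)$ this is exactly $M \models_Y \mathbf{D}\tuple y$.

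For the inductive step, conjunction is immediate. For disjunction $\psi \vee \theta$, given a split $X = X_1 \cup X_2$ witnessing $M \models_X \psi \vee \theta$, the split $Y = (X_1 \upharpoonright \tuple x) \cup (X_2 \upharpoonright \tuple x)$ works by the induction hypothesis applied to the free variables of $\psi$ and $\theta$, which are subsets of $\tuple x$. Conversely, given a split $Y = Y_1 \cup Y_2$, lift it to $X_i = \{s \in X : s \upharpoonright \tuple x \in Y_i\}$; here lax semantics is essential, since the lifted subteams need not be disjoint. For the existential quantifier $\exists v \psi$, assume $v \notin \tuple x$ (a routine renaming otherwise); given $H : X \to \parts(\dom(M)) \setminus \{\emptyset\}$ with $M \models_{X[H/v]} \psi$, define $H' : Y \to \parts(\dom(M)) \setminus \{\emptyset\}$ by
\[
H'(s') = \bigcup\{H(s) : s \in X,\ s \upharpoonright \tuple x = s'\},
\]
and verify the identity $Y[H'/v] = (X[H/v]) \upharpoonright (\tuple x v)$, after which the induction hypothesis on $\psi$ (whose free variables lie in $\tuple x v$) finishes the case. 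The converse uses any extension of $H'$ from $Y$ back to $X$ via $H(s) = H'(s \upharpoonright \tuple x)$. The universal quantifier is analogous, using the easily-checked equality $(X[M/v]) \upharpoonright \tuple x v = Y[M/v]$.

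The only genuinely new case compared to the analogous locality proofs in \cite{galliani12} is the dependency atom, and it is immediate from \textbf{TS-D}; the main technical point is therefore the disjunction and existential cases, where one must translate between splits or choice functions on $X$ and on its restriction $Y$, and lax semantics is what makes the translation go through smoothly in both directions.
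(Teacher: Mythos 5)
Your proof is correct and follows essentially the same route as the paper: the paper establishes locality by the same structural induction (delegating the first-order cases to the detailed proof in \cite{galliani12}) and treats the dependency atom exactly as you do, via \textbf{TS-D} and the identity $X(\tuple y) = (X \upharpoonright \tuple x)(\tuple y)$. You merely spell out the inductive cases that the paper cites rather than reproves, and your handling of the disjunction and quantifier cases is the standard, correct argument for lax semantics.
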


In this work, we will be mainly interested in dependencies which correspond to first-order definable properties of relations:
\begin{defin}
A dependency notion $\mathbf D$ is \emph{first-order definable} if there exists a first-order sentence $\mathbf D^*(R)$ over the signature $\{R\}$, where $R$ is a new relation symbol, such that 
\[
	M \in \mathbf D \Leftrightarrow M \models \mathbf D^*(R)
\]
for all models $M = (\dom(M), R)$.
\end{defin}
It is not necessarily the case that if $\mathbf D$ is first-order definable then $\FO(\mathbf D)$ and $\FO$ are equivalent with respect to sentences. For example $=\!\!(n, m)^*(R)$ is $\forall \tuple x \tuple y \tuple z (R\tuple x\tuple y \wedge R\tuple x\tuple z \rightarrow \tuple y = \tuple z)$, where $\tuple x$ has length $n$ and $\tuple y, \tuple z$ have length $m$; but as we said in Section \ref{sect:prelims}, dependence logic is stronger than first-order logic.

When is then the case that dependency conditions can be added safely to first-order logic, without increasing the expressive power? The following definition will provide us a partial answer:
\begin{defin}
A dependency notion $\mathbf D$ is \emph{upwards closed} if
\[
	(\dom(M), R) \in \mathbf D, R \subseteq S \Rightarrow (\dom(M), S) \in \mathbf D
\]
for all models $(\dom(M), R)$ and all relations $S$ over $\dom(M)$ of the same arity of $R$.
\end{defin}
It is easy to see that upwards closed dependencies induce upwards closed satisfaction rules: if $\mathbf D$ is upwards closed, $M \models_X \mathbf D \tuple x$ and $X \subseteq Y$ then it is always the case that $M \models_Y \mathbf D \tuple x$. However, differently from the case of downwards or union closure, upwards closure is not preserved by team semantics: if $\mathbf D$ is upwards closed, $\phi \in \FO(\mathbf D)$ and $M \models_X \phi$ then it is not in general true that $M \models_Y \phi$ for all $Y \supseteq X$ (for example, let $\phi$ be a nontrivial first-order literal and recall Rule \textbf{TS-lit}).

Some examples of upwards closed dependencies follow:
\begin{description}
\item[Non-emptiness:] $M \models_X \nonempty$ if and only if $X \not = \emptyset$; 
\item[Intersection:] $M \models_X \Diamond(\tuple x = \tuple y)$ if and only if there exists a $s \in X$ with $s(\tuple x) = s(\tuple y)$;
\item[Inconstancy:] $M \models_X \not =\!\!(\tuple x)$ if and only if $|X(\tuple x)| > 1$; 
\item[$n$-bigness:] For all $n \in \mathbb N$, $M \models_X |\tuple x| \geq n$ if and only if $|X(\tuple x)| \geq n$;
\item[Totality:] $M \models_X \all(\tuple x)$ if and only if $X(\tuple x) = \dom(M)^{|\tuple x|}$; 
\item[Non-dependence:] $M \models_X \not= \!\!(\tuple x, \tuple y)$ if and only if there exist $s, s' \in X$ with $s(\tuple x) = s'(\tuple x)$ but $s(\tuple y) \not = s'(\tuple y)$;\footnote{The same symbol $\not =\!\!(\tuple x, \tuple y)$ has been used in \cite{galliani12c} to describe a different non-dependence notion, stating that for \emph{every} $s \in X$ there exists a $s' \in X$ with $s(\tuple x) = s'(\tuple x), s(\tuple y) \not= s'(\tuple y)$. In that thesis it was proved that the resulting ``non-dependence logic'' is equivalent to inclusion logic. As we will see, this is not the case for the non-dependence notion of this paper.}
\item[Non-exclusion:] $M \models_X \tuple x \nmid \tuple y$ if and only if there exist $s, s' \in X$ with $s(\tuple x) = s'(\tuple y)$; 
\item[Infinity:] $M \models_X |\tuple x| \geq \omega$ if and only if $X(\tuple x)$ is infinite; 
\item[$\kappa$-bigness:] For all cardinals $\kappa$, $M \models_X |\tuple x| \geq \kappa$ if and only if $|X(\tuple x)| \geq \kappa$.
\end{description}
All the above examples except infinity and $\kappa$-bigness are first-order definable. The $\nonempty$ atom is the adaptation to first-order team semantics of the non-emptiness atom introduced in \cite{yang13b} for the propositional version of dependence logic, and the totality atom $\all$ is due to Abramsky and V\"a\"an\"anen (\cite{abramsky13}).

The main result of this section is the following:
\begin{theo}
\label{thm:upwards}
Let $\D$ be a collection of upwards closed first-order definable dependency conditions. Then for every formula $\phi(\tuple x)$ of $FO(\D)$ with free variables in $\tuple x$ there exists a first-order sentence $\phi^*(R)$, where $R$ is a new $|\tuple x|$-ary relation symbol, such that 
\[
	M \models_X \phi(\tuple x) \Leftrightarrow M, X(\tuple x) \models \phi^*(R)
\]
for all models $M$ over the signature of $\phi$ and all teams $X$. 

In particular, every sentence of $\FO(\D)$ is equivalent to some first-order sentence.
\end{theo}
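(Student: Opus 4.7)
The plan is to prove the statement by induction on $\phi$, strengthened to maintain a normal-form invariant on the translations. Specifically, I will show that every $\phi(\tuple x) \in \FO(\D)$ admits a first-order sentence
\[
\phi^*(R) \;=\; \forall \tuple x\, (R\tuple x \rightarrow \alpha_\phi(\tuple x)) \;\wedge\; \eta_\phi(R),
\]
where $\alpha_\phi(\tuple x)$ is a first-order formula in the signature of $\phi$ and $\eta_\phi(R)$ is a first-order sentence in the signature $\{R\}$ together with the signature of $\phi$, such that (i) $M \models_X \phi$ iff $M, X(\tuple x) \models \phi^*(R)$, and (ii) $\eta_\phi$ is \emph{upwards closed} in $R$, i.e., $M,R \models \eta_\phi$ and $R \subseteq S$ imply $M,S \models \eta_\phi$. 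Splitting the translation into a ``pointwise'' conjunct and an upwards-closed conjunct is what makes the induction stable, since the literal case is inherently downwards closed while the dependency atoms are inherently upwards closed.

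The base cases are direct. For a literal $\alpha$, set $\alpha_\phi = \alpha$ and $\eta_\phi = \top$; correctness follows from Proposition \ref{propo:flat}. For a dependency atom $\mathbf D \tuple y$, set $\alpha_\phi = \top$ and let $\eta_\phi$ be the first-order definition $\mathbf D^*$ with its relation symbol replaced by the appropriate projection/diagonalization of $R$ onto the columns indexed by $\tuple y$; upwards closure of $\eta_\phi$ follows from that of $\mathbf D$. Conjunction is componentwise. For $\forall v\,\psi$, since $X[M/v](\tuple x v) = X(\tuple x) \times \dom(M)$, one substitutes $R'\tuple x v \mapsto R\tuple x$ throughout $\psi^*$, obtaining $\alpha_\phi = \forall v\,\alpha_\psi$ and an analogous substitution inside $\eta_\psi$; upwards closure is preserved because $R \subseteq S$ implies $R \times \dom(M) \subseteq S \times \dom(M)$.

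The interesting cases are disjunction and the existential quantifier, where upwards closure is essential. For $\psi \vee \theta$, a valid split $X = Y \cup Z$ with $M \models_Y \psi$ and $M \models_Z \theta$ exists iff the \emph{maximal} candidates $X(\tuple x) \cap \alpha_\psi^M$ and $X(\tuple x) \cap \alpha_\theta^M$ (where $\alpha^M$ denotes the tuples satisfying $\alpha$ in $M$) already work: the pointwise conjuncts force any valid $Y(\tuple x), Z(\tuple x)$ to sit inside these maxima, and by upwards closure of $\eta_\psi, \eta_\theta$ enlarging them can only help. This collapses the existential quantification over subteams to the first-order condition ``every $\tuple a$ in $R$ satisfies $\alpha_\psi \vee \alpha_\theta$, plus $\eta_\psi$ and $\eta_\theta$ evaluated at the maximal candidates'', the latter implemented syntactically by substituting $R\tuple x \wedge \alpha_\psi(\tuple x)$ for $R\tuple x$ inside $\eta_\psi$, and analogously for $\eta_\theta$. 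The case of $\exists v\,\psi$ is entirely parallel: the teams $X[H/v]$ have $(\tuple x v)$-restrictions ranging exactly over the subsets of $X(\tuple x) \times \dom(M)$ projecting surjectively onto $X(\tuple x)$, and upwards closure lets one pick the largest such subset compatible with $\alpha_\psi$, namely $\{(\tuple a,m) : R\tuple a \wedge \alpha_\psi(\tuple a,m)\}$; this yields $\alpha_\phi = \exists v\,\alpha_\psi$ and the corresponding substitution in $\eta_\psi$.

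The main obstacle is identifying the correct inductive strengthening: neither a purely downwards-closed nor a purely upwards-closed translation is stable under the combination of literals and upwards-closed dependency atoms, so the $\alpha$-pointwise / $\eta$-upwards-closed decomposition is what lets disjunction and existential quantification be reduced to first-order conditions via the maximal-witness collapse. Once this invariant is fixed, every inductive step is mechanical, and the ``in particular'' clause is immediate: for a sentence $\phi$ one has $\tuple x = \emptyset$, $X(\tuple x)$ is the 0-ary relation ``true'', and $\phi^*$ evaluated at $R = \top$ is a first-order sentence in the signature of $\phi$ equivalent to $\phi$.
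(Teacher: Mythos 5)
Your proof is correct, and while the central insight is the same as the paper's -- witnesses for $\vee$ and $\exists$ may be taken \emph{maximal} among the assignments satisfying the first-order part of the formula, because upwards closure of the dependency atoms lets any witness be enlarged -- the way you organize the argument is genuinely different. Your pointwise component $\alpha_\phi$ is exactly the paper's \emph{flattening} $\phi^f$, and your invariant that $\eta_\phi$ is upwards closed in $R$ does the work that Theorem \ref{thm:upflat} does in the paper. The paper factors the proof into team-semantic steps: Theorem \ref{thm:upflat} (if $X \subseteq Y$, $M \models_X \phi$ and $M \models_Y \phi^f$, then $M \models_Y \phi$), a normal-form step (Proposition \ref{propo:upwards}) rewriting $\phi_1 \vee \phi_2$ as $(\phi_1^f \vee \phi_2^f) \wedge (\phi_1 \upharpoonright \phi_1^f) \wedge (\phi_2 \upharpoonright \phi_2^f)$ and $\exists v\, \phi$ as $(\exists v\, \phi^f) \wedge \forall v(\phi \upharpoonright \phi^f)$, and then a translation of the resulting ``clean'' formulas into first-order sentences (Proposition \ref{propo:detNF}). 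You instead run a single induction directly on the translation, carrying the $\alpha/\eta$ decomposition as a strengthened induction hypothesis and doing the enlargement at the level of relations (from $Y(\tuple x) \subseteq X(\tuple x) \cap \alpha_\psi^M$ and upwards closure of $\eta_\psi$ in $R$) rather than at the level of teams; your treatment of $\exists v$ via the subsets of $X(\tuple x) \times \dom(M)$ that project onto $X(\tuple x)$ is a correct reformulation of the lax semantics for the quantifier, and the substituted formulas remain upwards closed in $R$, so the invariant is stable. What the paper's factoring buys is reusable machinery -- the flattening, Theorem \ref{thm:upflat} and the restriction operator $\upharpoonright$ reappear in Sections \ref{sect:negdep} and \ref{sect:bounded} -- whereas your single-pass induction is more self-contained and makes the syntactic shape of $\phi^*(R)$ explicit. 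Only small points need tidying: in the quantifier cases assume $v$ is distinct from the variables of $\tuple x$ (renaming if necessary) and avoid variable capture when substituting $R\tuple z \wedge \alpha_\psi(\tuple z, w)$ for the atoms of the enlarged relation symbol inside $\eta_\psi$; and the $0$-ary reading of $R$ in the ``in particular'' clause is best handled, as usual, by evaluating sentences on the team $\{\emptyset\}$.
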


Let us begin by adapting the notion of \emph{flattening} of \cite{vaananen07} to the case of an arbitrary logic $\FO(\D)$:
\begin{defin}
Let $\D$ be any set of dependency conditions and let $\phi$ be a $FO(\D)$ formula. Then its \emph{flattening} $\phi^f$ is the first-order formula obtained by replacing any non-first-order atom with $\top$, where $\top$ is the trivially true atom.
\end{defin}
It is trivial to see, by induction on $\phi$, that 
\begin{lemma}
\label{lem:flat_imp}
For all $\D$, all $\phi \in \FO(\D)$, all models $M$ and all teams $X$ over $M$, if $M \models_X \phi$ then $M \models_X \phi^f$.
\end{lemma}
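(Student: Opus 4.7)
The plan is a straightforward structural induction on $\phi \in \FO(\D)$, exploiting the fact that flattening commutes with every connective and quantifier, and that each team semantics rule for a compound formula is expressed in terms of satisfaction of its immediate subformulas by (sub-)teams of $X$ or by teams derived from $X$ by supplementation.

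For the base cases, if $\phi$ is a first-order literal then $\phi^f = \phi$ and the implication is a tautology; if $\phi = \mathbf{D}\tuple y$ for some $\mathbf{D} \in \D$, then $\phi^f = \top$, and since $\top$ is satisfied by every team over every model the implication again holds trivially. No use is made here of upwards closure, first-order definability, or anything else about $\D$ — the lemma really is about the bookkeeping of replacing atoms by $\top$.

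For the inductive step, each clause of Definition~\ref{def:TS} gives the conclusion immediately from the induction hypothesis. If $M \models_X \psi \wedge \theta$, then both $M \models_X \psi$ and $M \models_X \theta$, and by IH both $M \models_X \psi^f$ and $M \models_X \theta^f$, so $M \models_X (\psi \wedge \theta)^f = \psi^f \wedge \theta^f$. For $\psi \vee \theta$, a splitting $X = Y \cup Z$ witnessing satisfaction of $\psi \vee \theta$ also witnesses, via IH, satisfaction of $\psi^f \vee \theta^f$. For $\exists v \psi$, the same choice function $H : X \to \parts(\dom(M)) \setminus \{\emptyset\}$ that witnesses satisfaction of $\exists v \psi$ witnesses satisfaction of $\exists v \psi^f$; the case of $\forall v \psi$ is similarly immediate since $X[M/v]$ does not depend on $\psi$.

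Honestly, there is no real obstacle here: the argument is routine, and the lemma is stated in precisely this form because it is the easy half of the equivalence between $\phi$ and $\phi^f$ that will later be promoted, under the additional assumption of upwards closure, to the nontrivial full equivalence in Theorem~\ref{thm:upwards}. The only point worth flagging is that the converse direction fails in general (e.g.\ for a non-trivial dependence atom like $=\!\!(x)$), which is exactly why the hard work in proving Theorem~\ref{thm:upwards} lies elsewhere.
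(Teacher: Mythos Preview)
Your proof is correct and is precisely the structural induction the paper alludes to when it writes ``It is trivial to see, by induction on $\phi$''; the paper gives no further details, and your case analysis fills them in accurately. One small inaccuracy in your closing commentary: there is no ``full equivalence'' between $\phi$ and $\phi^f$ even under upwards closure (Theorem~\ref{thm:upflat} gives only a conditional converse, requiring both $X\subseteq Y$ and $M\models_Y\phi^f$), so that remark should be tempered, but it does not affect the proof itself.
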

As we said, even if $\D$ contains only upwards closed dependency conditions it is not true that all formulas of $\FO(\D)$ are upwards closed. However, the following restricted variant of upwards closure is preserved:
\begin{theo}
\label{thm:upflat}
Let $\phi$ be a $FO(\D)$ formula, where $\D$ contains only upwards closed dependencies. Let $M$ be a first-order model, and let $X$, $Y$ be teams such that $X \subseteq Y$, $M \models_X \phi$, and $M \models_Y \phi^f$. Then $M \models_Y \phi$. 
\end{theo}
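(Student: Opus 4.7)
The plan is to proceed by structural induction on the formula $\phi$, using Lemma~\ref{lem:flat_imp} (which gives $M \models_Z \phi^f$ whenever $M \models_Z \phi$) and the flatness of first-order formulas (Proposition~\ref{propo:flat}) to freely combine witnesses for $\phi^f$ along the way. The base cases carry the key content: for a first-order literal $\alpha$ we have $\alpha^f = \alpha$, so the assumption $M \models_Y \phi^f$ already gives $M \models_Y \phi$; and for a dependency atom $\mathbf{D}\tuple y$ with $\mathbf{D} \in \D$, the assumption $M \models_X \mathbf{D}\tuple y$ says $(\dom(M), X(\tuple y)) \in \mathbf{D}$, and since $X \subseteq Y$ implies $X(\tuple y) \subseteq Y(\tuple y)$, upwards closure of $\mathbf{D}$ gives $(\dom(M), Y(\tuple y)) \in \mathbf{D}$, i.e.\ $M \models_Y \mathbf{D}\tuple y$.

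For conjunction the inductive step is immediate. For the disjunction $\psi \vee \theta$, I would take the splitting $X = X_1 \cup X_2$ witnessing $M \models_X \psi \vee \theta$ together with a splitting $Y = Y_1 \cup Y_2$ witnessing $M \models_Y \psi^f \vee \theta^f$, and set $Z_1 := X_1 \cup Y_1$, $Z_2 := X_2 \cup Y_2$. Then $Z_1 \cup Z_2 = X \cup Y = Y$, and since Lemma~\ref{lem:flat_imp} and flatness of $\psi^f, \theta^f$ make these first-order conditions closed under unions of teams, $M \models_{Z_1} \psi^f$ and $M \models_{Z_2} \theta^f$. The induction hypothesis, applied with $X_1 \subseteq Z_1$ and $X_2 \subseteq Z_2$, yields $M \models_{Z_1} \psi$ and $M \models_{Z_2} \theta$, hence $M \models_Y \psi \vee \theta$.

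The existential case is analogous and is the step that requires the most care. Given $H : X \to \parts(\dom(M)) \setminus \{\emptyset\}$ witnessing $M \models_{X[H/v]} \psi$ and $H' : Y \to \parts(\dom(M)) \setminus \{\emptyset\}$ witnessing $M \models_{Y[H'/v]} \psi^f$, define $H^*(s) := H(s) \cup H'(s)$ for $s \in X$ and $H^*(s) := H'(s)$ for $s \in Y \setminus X$. Then $X[H/v] \subseteq Y[H^*/v]$ and $Y[H^*/v] = X[H/v] \cup Y[H'/v]$; by Lemma~\ref{lem:flat_imp} and flatness, $M \models_{Y[H^*/v]} \psi^f$, so the induction hypothesis on $\psi$ gives $M \models_{Y[H^*/v]} \psi$, hence $M \models_Y \exists v \, \psi$. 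The universal case is straightforward, as $X[M/v] \subseteq Y[M/v]$ follows directly from $X \subseteq Y$ and one applies the induction hypothesis to $\psi$.

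The main obstacle is the disjunction and existential cases, where the natural subteam or choice-function witnesses for $X$ and for the flattening on $Y$ may be genuinely different; the trick is to merge them by union, which is safe precisely because $\phi^f$ is first-order and hence flat, so extra assignments in a subteam can never destroy satisfaction of the flattened formula. The upwards-closure hypothesis on $\D$ is used only in the base case for dependency atoms, but it is what lets the whole induction get off the ground.
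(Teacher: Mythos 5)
Your proof is correct and follows essentially the same route as the paper's: structural induction where upwards closure handles the dependency atoms, and the disjunction and existential witnesses for $X$ and for $\phi^f$ on $Y$ are merged by union, with flatness of the flattened formula (Proposition~\ref{propo:flat}) guaranteeing the merged teams still satisfy $\phi^f$ so the induction hypothesis applies. The paper's argument uses exactly the same $Z_i = X_i \cup Y_i$ splitting and the same piecewise-union choice function.
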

\begin{proof}
The proof is by structural induction on $\phi$. 
\begin{enumerate}
\item If $\phi$ is a first-order literal, $\phi^f = \phi$ and there is nothing to prove; 
\item If $\phi$ is of the form $\mathbf D \tuple x$ for some $\mathbf D \in \D$, $M \models_X \phi$ and $X \subseteq Y$, then by upwards closure $M \models_Y \phi$;
\item Suppose that $M \models_X \phi_1 \vee \phi_2$ and $M \models_Y \phi_1^f \vee \phi_2^f$. Now $X = X_1 \cup X_2$ for two $X_1$, $X_2$ such that $M \models_{X_1} \phi_1$ and $M \models_{X_2} \phi_2$, and therefore by Lemma \ref{lem:flat_imp} $M \models_{X_1} \phi_1^f$ and $M \models_{X_2} \phi_2^f$. Furthermore, $Y = Y_1 \cup Y_2$ for two $Y_1$, $Y_2$ such that $M \models_{Y_1} \phi^f$ and $M \models_{Y_2} \phi_2^f$. Let $Z_1 = X_1 \cup Y_1$ and $Z_2 = X_2 \cup Y_2$; then $Z_1 \cup Z_2 = X \cup Y = Y$, and by Proposition \ref{propo:flat} $M \models_{Z_1} \phi_1^f$ and $M \models_{Z_2} \phi_2^f$. But $M \models_{X_1} \phi_1$ and $X_1 \subseteq Z_1$, so by induction hypothesis $M \models_{Z_1} \phi_1$; and similarly, $M \models_{X_2} \phi_2$ and $X_2 \subseteq Z_2$, so $M \models_{Z_2} \phi_2$. Therefore $M \models_Y \phi_1 \vee \phi_2$, as required. 
\item If $M \models_{X} \phi_1 \wedge \phi_2$ then $M \models_X \phi_1$ and $M \models_X \phi_2$. Then by induction hypothesis, since $M \models_Y \phi_1^f$ and $X \subseteq Y$, $M \models_Y \phi_1$; and similarly, since $M \models_Y \phi_2^f$ and $X \subseteq Y$, $M \models_Y \phi_2$, and therefore $M \models_Y \phi_1 \wedge \phi_2$.
\item If $M \models_X \exists v \phi$ then there is a function $H: X \rightarrow \parts(\dom(M)) \backslash \{\emptyset\}$ such that $M \models_{X[H/v]} \phi$, and therefore (by Lemma \ref{lem:flat_imp}) such that $M \models_{X[H/v]} \phi^f$. Similarly, if $M \models_Y \exists v \phi^f$ then for some $K$ we have that $M \models_{Y[K/v]} \phi^f$. Now let $W: Y \rightarrow \parts(\dom(M)) \backslash \{\emptyset\}$ be such that
\[
	W(s) = \left\{\begin{array}{l l}
		H(s) \cup K(s) & \mbox{ if } s \in X;\\
		K(s) & \mbox{ if } s \in Y \backslash X.
	\end{array}\right.
\]
Then $Y[W/v] = X[H/v] \cup Y[K/v]$, and therefore by Proposition \ref{propo:flat} $M \models_{Y[W/v]} \phi^f$. Then by induction hypothesis $M \models_{Y[W/v]} \phi$, since $X[H/v]$ satisfies $\phi$ and is contained in $Y[W/v]$; and therefore $M \models_Y \exists v \phi$, as required. 
\item If $M \models_X \forall v \phi$ then $M \models_{X[M/v]} \phi$, and if $M \models_Y \forall v \phi^f$ then $M \models_{Y[M/v]} \phi^f$. Now $X[M/v] \subseteq Y[M/v]$, so by induction hypothesis $M \models_{Y[M/v]} \phi$, and therefore $M \models_Y \forall v \phi$.
\end{enumerate}
\end{proof}
\begin{defin}
If $\theta$ is a first-order formula and $\phi$ is a $FO(\D)$ formula we define $(\phi \upharpoonright \theta)$ as $(\lnot \theta) \vee (\theta \wedge \phi)$, where $\lnot \theta$ is a shorthand for the first-order formula in negation normal form which is equivalent to the negation of $\theta$.
\end{defin}
The following lemma is obvious:
\begin{lemma}
\label{lem:FOrest}
For all first order $\theta$ and $\phi \in FO(\D)$, $M \models_X (\phi \upharpoonright \theta)$ if and only if $M \models_Y \phi$ for $Y = \{ s \in X : M \models_s \theta\}$. 
\end{lemma}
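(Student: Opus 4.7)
The plan is to unpack the definition of $(\phi \upharpoonright \theta)$ using rule \textbf{TS-$\vee$} and then exploit flatness (Proposition \ref{propo:flat}) of the first-order formulas $\theta$ and $\lnot \theta$ to control the shape of any witnessing split of $X$.

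For the forward direction, I would assume $M \models_X (\lnot \theta) \vee (\theta \wedge \phi)$. By \textbf{TS-$\vee$} there is a decomposition $X = X_1 \cup X_2$ with $M \models_{X_1} \lnot \theta$, $M \models_{X_2} \theta$ and $M \models_{X_2} \phi$. Since $\theta$ and $\lnot \theta$ are first-order, Proposition \ref{propo:flat} says that every $s \in X_1$ satisfies $\lnot \theta$ in Tarski's sense, while every $s \in X_2$ satisfies $\theta$. The key observation I would then make is that under these conditions $X_2$ coincides exactly with $Y$: on the one hand $X_2 \subseteq Y$ because elements of $X_2$ are in $X$ and satisfy $\theta$; on the other hand, any $s \in Y$ lies in $X_1 \cup X_2$ but cannot lie in $X_1$ (it satisfies $\theta$, not $\lnot \theta$), hence $Y \subseteq X_2$. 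Thus $Y = X_2$, and $M \models_Y \phi$ follows directly.

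For the backward direction, assume $M \models_Y \phi$ with $Y = \{s \in X : M \models_s \theta\}$. Take the canonical split $X_1 = X \setminus Y$ and $X_2 = Y$, so that $X = X_1 \cup X_2$. Every $s \in X_1$ satisfies $\lnot \theta$ by excluded middle, so by Proposition \ref{propo:flat} $M \models_{X_1} \lnot \theta$; every $s \in X_2$ satisfies $\theta$ by definition of $Y$, giving $M \models_{X_2} \theta$; and $M \models_{X_2} \phi$ is the hypothesis. Applying \textbf{TS-$\wedge$} and \textbf{TS-$\vee$} assembles $M \models_X (\lnot \theta) \vee (\theta \wedge \phi)$, as required.

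There is no genuine obstacle here (the author calls the statement obvious); the only point deserving explicit mention is that in the forward direction the witnessing subteam $X_2$ from \textbf{TS-$\vee$} is pinned down to be exactly $Y$ by the flatness of $\theta$, which is what prevents us from needing any closure properties of $\phi$.
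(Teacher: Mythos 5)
Your proof is correct and is exactly the ``obvious'' argument the paper has in mind when it omits the proof: unpack \textbf{TS-$\vee$}, use Proposition \ref{propo:flat} to see that the witnessing subteam for $\theta \wedge \phi$ must be precisely $Y = \{s \in X : M \models_s \theta\}$, and conversely split $X$ into $X \setminus Y$ and $Y$. Your explicit observation that flatness of $\theta$ and $\lnot\theta$ pins the split down uniquely (so no closure property of $\phi$ is needed) is the right point to emphasize.
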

One can observe that $(\phi \upharpoonright \theta)$ is logically equivalent to $\theta \hookrightarrow \phi$, where $\hookrightarrow$ is the \emph{maximal implication} of \cite{kontinennu09}:
\begin{description}
\item[TS-maximp:] $M \models_X \theta \hookrightarrow \phi$ if and only if for all maximal $Y \subseteq X$ s.t. $M \models_Y \theta$, $M \models_Y \phi$.
\end{description}
We use the notation $(\phi \upharpoonright \theta)$, instead of $\theta \hookrightarrow \phi$, to make it explicit that $\theta$ is first order and that Lemma \ref{lem:FOrest} holds.

The next step of our proof of Theorem \ref{thm:upwards} is to identify a fragment of our language whose satisfaction conditions do not involve quantification over second-order objects such as teams or functions. We do so by limiting the availability of disjunction and existential quantification:
\begin{defin}
A $FO(\D)$ formula $\phi$ is \emph{clean} if
\begin{enumerate}
\item All its disjunctive subformulas $\psi_1 \vee \psi_2$ are first order or of the form $\psi \upharpoonright \theta$ for some suitable choice of $\psi$ and $\theta$ (where $\theta$ is first order);
\item All its existential subformulas $\exists v \psi$ are first order.
\end{enumerate}
\end{defin}
As the next proposition shows, clean formulas correspond to first-order definable properties of relations. 
\begin{propo}
\label{propo:detNF}
Let $\D$ be a class of first-order definable dependencies and let $\phi(\tuple x) \in \FO(\D)$ be a clean formula with free variables in $\tuple x$. Then there exists some first-order sentence $\phi^*(R)$, where $R$ is a new $|\tuple x|$-ary relation, such that
\begin{equation}
\label{eq:repFO}
	M \models_X \phi(\tuple x) \Leftrightarrow M, X(\tuple x) \models \phi^*(R).
\end{equation}
\end{propo}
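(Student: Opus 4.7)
The plan is to proceed by structural induction on the clean $\FO(\D)$ formula $\phi(\tuple x)$, building $\phi^*(R)$ explicitly in each case, where $R$ is a fresh $|\tuple x|$-ary relation symbol whose intended denotation is $X(\tuple x)$. Equivalence (\ref{eq:repFO}) is then verified by direct unfolding of the team-semantics rules.

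For the first-order ingredients of a clean formula -- first-order literals, first-order disjunctions, and first-order existentials -- Proposition \ref{propo:flat} reduces team satisfaction to pointwise Tarski satisfaction, so in each such subcase one takes $\phi^*(R) := \forall \tuple x\,(R\tuple x \rightarrow \phi)$. For a dependency atom $\mathbf D \tuple y$ with $\tuple y$ a subtuple of $\tuple x$, first-order definability of $\mathbf D$ yields some sentence $\mathbf D^*(S)$; since $X(\tuple y)$ is the $\tuple y$-projection of $X(\tuple x)$, which is itself first-order definable from $R$, one obtains $\phi^*(R)$ by replacing every atomic occurrence $S\tuple u$ in $\mathbf D^*(S)$ with a first-order formula (using fresh bound variables, to avoid capture when $\tuple y$ has repetitions) asserting that $\tuple u$ lies in the $\tuple y$-projection of $R$. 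For conjunction, $\phi^*(R) := \phi_1^*(R) \wedge \phi_2^*(R)$ works at once.

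The two cases that do genuine work are the restricted disjunction and the universal quantifier. For $\phi = \psi \upharpoonright \theta$, Lemma \ref{lem:FOrest} identifies $M \models_X \phi$ with $M \models_Y \psi$ for $Y = \{s \in X : M \models_s \theta\}$, and the trace $Y(\tuple x)$ equals $\{\tuple a \in X(\tuple x) : M \models \theta[\tuple a / \tuple x]\}$. So, given $\psi^*(R')$ produced inductively, one defines $\phi^*(R)$ by replacing every atomic occurrence $R'\tuple z$ in $\psi^*$ with $R\tuple z \wedge \theta[\tuple z / \tuple x]$. For $\phi = \forall v\,\psi$, team satisfaction reduces to $M \models_{X[M/v]} \psi$, whose relevant trace is $X(\tuple x) \times \dom(M)$, so given $\psi^*(R')$ with $R'$ of arity $|\tuple x|+1$ one defines $\phi^*(R)$ by replacing every atomic occurrence of $R'$ applied to a tuple of the form $\tuple z w$ of length $|\tuple x| + 1$ with $R\tuple z$, thereby dropping the last component.

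The main obstacle is not conceptual but one of bookkeeping: the arity and intended extension of the auxiliary relation symbol change whenever the induction descends through $\forall$ or through a restriction $\upharpoonright \theta$, so the substitutions encoding the corresponding trace transformation must be applied uniformly to every atomic occurrence of the old relation symbol inside the inductively produced first-order formula, with systematic renaming to avoid variable capture whenever the projection tuple $\tuple y$ (in a dependency atom) or the variables of $\theta$ (in a restriction) share names with the surrounding formula. Once the substitution mechanism is carefully set up, verification of (\ref{eq:repFO}) in each inductive case is immediate from Proposition \ref{propo:flat}, Lemma \ref{lem:FOrest}, and first-order definability of each $\mathbf D \in \D$.
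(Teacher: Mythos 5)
Your proposal is correct and follows essentially the same route as the paper: structural induction on clean formulas, collapsing all purely first-order subformulas via Proposition \ref{propo:flat} into $\forall \tuple x(R\tuple x \rightarrow \phi)$, and handling dependency atoms, restrictions $\psi \upharpoonright \theta$, and universal quantifiers by the same relation-symbol substitutions (projection onto $\tuple y$, relativization by $\theta$, and dropping the last coordinate, respectively). The extra attention you give to variable capture is sound bookkeeping that the paper handles implicitly by quantifying over a fresh copy of $\tuple x$ in the dependency-atom case.
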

\begin{proof}
By induction over $\phi$. 
\begin{enumerate}
\item If $\phi(\tuple x)$ is a first-order formula (not necessarily just a literal) then let $\phi^*(R) = \forall \tuple x (R \tuple x \rightarrow \phi(\tuple x))$. By Proposition \ref{propo:flat}, (\ref{eq:repFO}) holds. 
\item If $\phi(\tuple x)$ is a dependency atom $\mathbf D \tuple y$, where $\mathbf D \in \D$ and $\tuple y$ is a tuple (possibly with repetitions) of variables occurring in $\tuple x$, let $\phi^*(R)$ be obtained from $\mathbf D^*(S)$ by replacing every instance $S \tuple z$ of $S$ in it with $\exists \tuple x (\tuple z = \tuple y \wedge R \tuple x)$. Indeed, $M \models_X \mathbf D \tuple y$ if and only if $M, X(\tuple y) \models \mathbf D^*(S)$, and $\tuple m \in X(\tuple y)$ if and only if $M, X(\tuple x) \models \exists \tuple x  (\tuple m = \tuple y \wedge R \tuple x)$.
\item If $\phi(\tuple x)$ is of the form $(\psi(\tuple x) \upharpoonright \theta(\tuple x))$, let $\phi^*(R)$ be obtained from $\psi^*(R)$ by replacing every instance $R \tuple z$ of $R$ with $R \tuple z \wedge \theta(\tuple z)$. Indeed, by Lemma \ref{lem:FOrest} $M \models_X (\psi(\tuple x) \upharpoonright \theta(\tuple x))$ if and only if $M \models_Y \psi(\tuple x)$ for $Y = \{s \in X: M \models_s \theta\}$, and $\tuple m \in Y(\tuple x) \Leftrightarrow \tuple m \in X(\tuple x) \mbox{ and } M \models \theta(\tuple m)$. 
\item If $\phi(\tuple x)$ is of the form $\psi(\tuple x) \wedge \theta(\tuple x)$ simply let $\phi^*(R) = \psi^*(R) \wedge \theta^*(R)$.
\item If $\phi(\tuple x)$ is of the form $\forall v \psi(\tuple x, v)$, where we assume without loss of generality that $v$ is distinct from all $x \in \tuple x$, and $\psi^*(S)$ corresponds to $\psi(\tuple x, v)$ then let $\phi^*(R)$ be obtained from $\psi^*(S)$ by replacing every $S \tuple z w$ with $R \tuple z$. Indeed, $M \models_X \forall v \psi$ if and only if $M \models_{X[M/v]} \psi(\tuple x , v)$ and $\tuple m m' \in X[M/v](\tuple x v)$ if and only if $\tuple m \in X(\tuple x)$. 
\end{enumerate}
\end{proof}
All that is now left to prove is the following:
\begin{propo}
\label{propo:upwards}
Let $\D$ be a family of upwards closed dependencies. Then every $\FO(\D)$ formula is equivalent to some clean $\FO(\D)$ formula. 
\end{propo}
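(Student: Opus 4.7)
The plan is to proceed by structural induction on $\phi$, producing a clean $\FO(\D)$ formula $\phi^c$ logically equivalent to $\phi$ at each step. The easy cases are literals and dependency atoms (where $\phi^c = \phi$), conjunction ($\phi^c = \phi_1^c \wedge \phi_2^c$), and universal quantification ($\phi^c = \forall v \psi^c$), since the definition of cleanness imposes no restriction on $\wedge$ or $\forall$. All the real work goes into eliminating disjunctions and existentials that are not already first-order.

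For a disjunction $\phi_1 \vee \phi_2$, I propose to take
\[
\phi^c = (\phi_1^f \vee \phi_2^f) \wedge (\phi_1^c \upharpoonright \phi_1^f) \wedge (\phi_2^c \upharpoonright \phi_2^f),
\]
which is clean because each disjunction occurring in it is either first-order or of the form $\psi \upharpoonright \theta$ with $\theta$ first-order. For an existential $\exists v \psi$, I propose
\[
\phi^c = (\exists v \psi^f) \wedge \forall v (\psi^c \upharpoonright \psi^f),
\]
which is likewise clean, since the first conjunct is first-order and the second combines a permitted $\forall$ with a restricted disjunction whose core $\psi^c$ is clean. The correctness arguments in both cases follow the same template: replace an arbitrary second-order witness (a split $X = Y_1 \cup Y_2$, or a choice function $H$) by its canonical ``maximal'' counterpart, namely $Y_i^{\max} = \{s \in X : M \models_s \phi_i^f\}$ or $H^{\max}(s) = \{m \in \dom(M) : M \models_{s[m/v]} \psi^f\}$. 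By Lemma \ref{lem:flat_imp} combined with the flatness of first-order formulas (Proposition \ref{propo:flat}), any original witness sits inside the maximal one; then Theorem \ref{thm:upflat} lifts satisfaction of $\phi_i$ (respectively $\psi$) from the original subteam to the maximal one, since on the latter the flattening is certainly satisfied. The leftover first-order conjuncts $\phi_1^f \vee \phi_2^f$ and $\exists v \psi^f$ record exactly the conditions that the maximal witnesses cover what they need to cover (namely $Y_1^{\max} \cup Y_2^{\max} = X$, or $H^{\max}(s) \neq \emptyset$ for every $s \in X$), while Lemma \ref{lem:FOrest} identifies the team denoted by the $\upharpoonright$ construct with the intended restriction.

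The main obstacle is verifying that the canonical maximizations really do preserve satisfaction in both directions; this is precisely the point at which the upwards closure of the atoms in $\D$ is consumed, via Theorem \ref{thm:upflat}. The remaining bookkeeping (checking that $X[M/v]$ restricted to $\psi^f$ coincides with $X[H^{\max}/v]$, that $Y_1 \subseteq Y_1^{\max}$ etc.) is mechanical. Once Proposition \ref{propo:upwards} is in hand, combining it with Proposition \ref{propo:detNF} yields Theorem \ref{thm:upwards} immediately.
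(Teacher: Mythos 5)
Your proposal is correct and follows essentially the same route as the paper: the same two rewritings $(\phi_1^f \vee \phi_2^f) \wedge (\phi_1 \upharpoonright \phi_1^f) \wedge (\phi_2 \upharpoonright \phi_2^f)$ and $(\exists v\, \psi^f) \wedge \forall v (\psi \upharpoonright \psi^f)$, verified via the maximal witnesses $\{s : M \models_s \phi_i^f\}$ and $K(s) = \{m : M \models_{s[m/v]} \psi^f\}$ together with Lemma \ref{lem:flat_imp}, Proposition \ref{propo:flat}, Theorem \ref{thm:upflat} and Lemma \ref{lem:FOrest}. Your phrasing as an explicit structural induction producing $\phi^c$ (rather than applying the two equivalences to all subformulas) is only a cosmetic difference.
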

\begin{proof}
It suffices to observe the following facts: 
\begin{itemize}
\item If $\phi_1(\tuple x)$ and $\phi_2(\tuple x)$ are in $\FO(\D)$ then $\phi_1(\tuple x) \vee \phi_2(\tuple x)$ is logically equivalent to 
\[
	(\phi_1^f \vee \phi_2^f) \wedge (\phi_1 \upharpoonright \phi_1^f) \wedge (\phi_2 \upharpoonright \phi_2^f).
\]
Indeed, suppose that $M \models_X \phi_1 \vee \phi_2$: then, by Lemma \ref{lem:flat_imp}, $M \models_X \phi_1^f \vee \phi_2^f$. Furthermore, $X = Y \cup Z$ for two $Y$ and $Z$ such that $M \models_Y \phi_1$ and $M \models_Z \phi_2$. Now let $Y' = \{s \in X : M \models_s \phi_1^f\}$ and $Z' = \{s \in X : M \models_s \phi_2^f\}$:  by Lemma \ref{lem:flat_imp} and Proposition \ref{propo:flat} we have that $Y \subseteq Y'$ and that $Z \subseteq Z'$, and therefore by Theorem \ref{thm:upflat} $M \models_{Y'} \phi_1$ and $M \models_{Z'} \phi_2$. Thus by Lemma \ref{lem:FOrest} $M \models_X (\phi_1 \upharpoonright \phi_1^f)$ and $M \models_X (\phi_2\upharpoonright \phi_2^f)$, as required. 

Conversely, suppose that $M \models_X (\phi_1^f \vee \phi_2^f) \wedge (\phi_1 \upharpoonright \phi_1^f) \wedge (\phi_2 \upharpoonright \phi_2^f)$. Then let $Y = \{s \in X : M \models_s \phi_1^f\}$ and $Z = \{s \in X : M \models_s \phi_2^f\}$. By Proposition \ref{propo:flat} and since $M \models_X \phi_1^f \vee \phi_2^f$, $X = Y \cup Z$; and by Lemma \ref{lem:FOrest}, $M \models_Y \phi_1$ and $M \models_Z \phi_2$. So $M \models_X \phi_1 \vee \phi_2$, as required.
\item If $\phi(\tuple x, v) \in \FO(\D)$ then $\exists v \phi(\tuple x, v)$ is logically equivalent to 
\[
	(\exists v \phi^f(\tuple x, v)) \wedge \forall v (\phi(\tuple x, v) \upharpoonright \phi^f(\tuple x, v)).
\]
Indeed, suppose that $M \models_X \exists v \phi(\tuple x, v)$. Then by Lemma \ref{lem:flat_imp} $M \models_X \exists v \phi^f(\tuple x, v)$. Furthermore, for some $H: X \rightarrow \parts(\dom(M)) \backslash \{\emptyset\}$ and for $Y = X[H/v]$ it holds that $M \models_{Y} \phi(\tuple x, v)$. Now let $Z = \{h \in X[M/v] : M \models_h \phi^f(\tuple x, v)\}$. By Proposition \ref{propo:flat}, $M \models_Z \phi^f(\tuple x, v)$; and since $Y \subseteq Z$, by Theorem \ref{thm:upflat} $M \models_Z \phi(\tuple x, v)$, and therefore by Lemma \ref{lem:FOrest} $M \models_{X[M/v]} (\phi(\tuple x, v) \upharpoonright \phi^f(\tuple x, v))$, as required. 

Conversely, suppose that $M \models_X (\exists v \phi^f(\tuple x, v)) \wedge \forall v (\phi(\tuple x, v) \upharpoonright \phi^f (\tuple x, v))$. Then, for all $s \in X$, let $K(s) = \{m \in \dom(M) : M \models_{s[m/v]} \phi^f(\tuple x, v)\}$. Since $M \models_X \exists v \phi^f(\tuple x, v)$, $K(s)$ is nonempty for all $s \in X$, and by construction $X[K/v] = \{s \in X[M/v] : M \models_s \phi^f(\tuple x, v)\}$. Now $M \models_{X[M/v]}(\phi(\tuple x, v) \upharpoonright \phi^f(\tuple x, v))$, so by Lemma \ref{lem:FOrest} $M \models_{X[K/v]} \phi(\tuple x, v)$ and in conclusion $M \models_X \exists v \phi(\tuple x, v)$.
\end{itemize}
Applying inductively these two results to all subformulas of some $\phi \in \FO(\D)$ we can obtain some clean $\phi'$ to which $\phi$ is equivalent, and this concludes the proof.
\end{proof}
Finally, the proof of Theorem \ref{thm:upwards} follows at once from Propositions \ref{propo:detNF} and \ref{propo:upwards}.

Since, as we saw, the negations of functional and exclusion dependencies are upwards closed, we obtain at once the following corollary: 
\begin{coro}
\label{coro:nexfu}
Any sentence of $\FO(\not =\!\!(\cdot, \cdot), \nmid)$ (that is, of first-order logic plus negated functional and exclusion dependencies) is equivalent to some first-order sentence.
\end{coro}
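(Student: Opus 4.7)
The plan is to invoke Theorem \ref{thm:upwards} directly, so the whole task reduces to verifying that both non-dependence and non-exclusion fit its hypotheses; that is, that each corresponds (in the generalized atom framework of this section) to an upwards closed and first-order definable dependency.

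For non-dependence $\not =\!\!(\tuple x, \tuple y)$, with $\tuple x$ of length $n$ and $\tuple y$ of length $m$, I would introduce the dependency $\not =\!\!(n,m)$ of arity $n+m$, view a relation $R$ of this arity as playing the role of $X(\tuple x \tuple y)$, and take as defining sentence
\[
\not =\!\!(n,m)^*(R) \;=\; \exists \tuple x \, \tuple y \, \tuple y' \bigl( R \tuple x \tuple y \wedge R \tuple x \tuple y' \wedge \tuple y \not = \tuple y' \bigr),
\]
which is plainly first order. Upwards closure is immediate: if $R \subseteq S$ and $R$ contains witnessing tuples $\tuple a \tuple b, \tuple a \tuple c$ with $\tuple b \not = \tuple c$, then so does $S$. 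Similarly, for non-exclusion $\tuple x \nmid \tuple y$ with $|\tuple x| = |\tuple y| = n$, I would use the arity $2n$ dependency defined by
\[
\nmid_n^{\,*}(R) \;=\; \exists \tuple u \, \tuple v \, \tuple w \bigl( R \tuple u \tuple v \wedge R \tuple w \tuple u \bigr),
\]
again first order and upwards closed by the same one-line argument. In both cases the rule \textbf{TS-D} applied to these sentences reproduces exactly the satisfaction conditions for $\not =\!\!(\tuple x, \tuple y)$ and $\tuple x \nmid \tuple y$ given in Section \ref{sect:upwards}.

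Taking $\D$ to be the union of these two families (indexed over all relevant arities), Theorem \ref{thm:upwards} then applies, yielding that every sentence of $\FO(\not =\!\!(\cdot, \cdot), \nmid)$ is equivalent to a first-order sentence. There is no real obstacle here beyond correctly packaging the two atoms as generalized dependencies of the appropriate arity; the upwards closure and first-order definability are both transparent once the defining formulas are written down.
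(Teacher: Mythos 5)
Your proposal is correct and matches the paper's own argument, which likewise derives the corollary immediately from Theorem \ref{thm:upwards} after noting (in the list of examples in Section \ref{sect:upwards}) that non-dependence and non-exclusion are upwards closed and first-order definable. You merely spell out the defining sentences and the packaging as generalized dependencies, which the paper leaves implicit.
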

\section{Adding Constancy Atoms}
\label{sect:upwards_const}
As we saw in the previous section, upwards closed dependencies can be added to first-order logic without increasing its expressive power (with respect to sentences); and as mentioned in Section \ref{sect:prelims}, this is also true for the (non upwards-closed) constancy dependencies $=\!\!(\tuple x)$. 

But what if our logic contains both upwards closed \emph{and} constancy dependencies? As we will now see, the conclusion of Theorem \ref{thm:upwards} remains valid: 
\begin{theo}
\label{thm:upwards_const}
Let $\D$ be a collection of upwards closed first-order definable dependency conditions. Then for every formula $\phi(\tuple x)$ of\footnote{Here $=\!\!(\cdot)$ represents the class of all constancy dependencies of all arities. But it is easy to see that the one of arity 1 would suffice: indeed, if $\tuple x$ is $x_1 \ldots x_n$ then $=\!\!(\tuple x)$ is logically equivalent to $=\!\!(x_1) \wedge \ldots \wedge =\!\!(x_n)$.} $FO(=\!\!(\cdot), \D)$ with free variables in $\tuple x$ there exists a first-order sentence $\phi^*(R)$, where $R$ is a new $|\tuple x|$-ary relation symbol, such that 
\[
	M \models_X \phi(\tuple x) \Leftrightarrow M, X(\tuple x) \models \phi^*(R).
\]
In particular, every sentence of $\FO(\D)$ is equivalent to some first-order sentence.
\end{theo}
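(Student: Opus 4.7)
The plan is to reduce Theorem \ref{thm:upwards_const} to Theorem \ref{thm:upwards} through a normalization lemma: every formula $\phi \in FO(=\!\!(\cdot), \D)$ is equivalent to one of the form $\exists \tuple z(=\!\!(\tuple z) \wedge \psi)$, where $\psi \in FO(\D)$ contains no constancy atoms. Once this is in hand, applying Theorem \ref{thm:upwards} to $\psi(\tuple x, \tuple z)$ produces a first-order sentence $\psi^*(S)$ over a $|\tuple x \tuple z|$-ary symbol $S$ such that $M \models_Y \psi \Leftrightarrow M, Y(\tuple x \tuple z) \models \psi^*(S)$ for all teams $Y$. The outer constancy-guarded existential then says precisely that there is $\tuple m \in \dom(M)^{|\tuple z|}$ with $M \models_{X[\tuple m/\tuple z]} \psi$, where $X[\tuple m/\tuple z] = \{s[\tuple m/\tuple z] : s \in X\}$; since $X[\tuple m/\tuple z](\tuple x \tuple z) = X(\tuple x) \times \{\tuple m\}$, the required $\phi^*(R)$ is $\exists \tuple m \, \psi^{**}(R,\tuple m)$, where $\psi^{**}$ is obtained from $\psi^*$ by replacing every occurrence $S\tuple a \tuple b$ with $R\tuple a \wedge \tuple b = \tuple m$.

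I would prove the normalization lemma by structural induction on $\phi$. The base cases are immediate except for the constancy atom, where a direct unwinding of team semantics shows $=\!\!(\tuple y) \equiv \exists \tuple z(=\!\!(\tuple z) \wedge \tuple y = \tuple z)$ for $\tuple z$ fresh of the same length as $\tuple y$: any witnessing $H$ must be the constant singleton function whose value is the common value of $\tuple y$ on the team. The inductive step rests on four prenex-style identities, in which the $\tuple z, \tuple z_i$ are taken fresh and pairwise disjoint: $\exists \tuple z_1(=\!\!(\tuple z_1) \wedge \psi_1) \wedge \exists \tuple z_2(=\!\!(\tuple z_2) \wedge \psi_2) \equiv \exists \tuple z_1 \tuple z_2(=\!\!(\tuple z_1 \tuple z_2) \wedge \psi_1 \wedge \psi_2)$; the analogous identity for $\vee$; $\exists v \exists \tuple z(=\!\!(\tuple z) \wedge \psi) \equiv \exists \tuple z(=\!\!(\tuple z) \wedge \exists v \psi)$; and $\forall v \exists \tuple z(=\!\!(\tuple z) \wedge \psi) \equiv \exists \tuple z(=\!\!(\tuple z) \wedge \forall v \psi)$. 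Each is verified by direct comparison of the two evaluations in team semantics.

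The main obstacle is the universal-quantifier identity, since classically $\forall v \exists \tuple z$ is strictly weaker than $\exists \tuple z \forall v$. The key point is that the constancy atom $=\!\!(\tuple z)$ evaluated after $\forall v$ forces the witnessing Skolem function on the extended team $X[M/v]$ to be the constant singleton $s \mapsto \{\tuple m_0\}$ for some fixed $\tuple m_0$, collapsing it to a genuine constant that can equivalently be committed to before $\forall v$; both sides deliver precisely the team $X[\tuple m_0/\tuple z, M/v]$. The disjunction identity is also delicate: the split $X = Y \cup W$ forces each branch to pick its own constancy witness $\tuple m_1$ and $\tuple m_2$, so one must introduce disjoint fresh tuples $\tuple z_1$ and $\tuple z_2$ and merge them into a single outer $\exists \tuple z_1 \tuple z_2(=\!\!(\tuple z_1 \tuple z_2) \wedge (\psi_1 \vee \psi_2))$. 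Here locality (Theorem \ref{thm:local}) does the work: since $\tuple z_{3-i}$ is not free in $\psi_i$, evaluating $\psi_i$ on $Y[\tuple m_1/\tuple z_1, \tuple m_2/\tuple z_2]$ is the same as evaluating it on $Y[\tuple m_i/\tuple z_i]$, so the two independent witnesses are safely combined.
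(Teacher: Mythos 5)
Your proof is correct and follows essentially the same route as the paper: normalize $\phi$ to $\exists \tuple v(=\!\!(\tuple v) \wedge \psi)$ with $\psi \in \FO(\D)$ (the paper's Lemma \ref{lem:const}), apply Theorem \ref{thm:upwards} to $\psi$, and then translate the constancy-guarded existential by replacing $S\tuple a\tuple b$ with $R\tuple a \wedge \tuple b = \tuple v$ under an ordinary first-order existential quantifier. The only difference is that you also spell out the induction behind the normalization lemma (quantifier swaps and the locality-based handling of disjunction), which the paper simply defers to the analogous argument in \cite{galliani12}; your treatment of those steps is accurate.
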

The main ingredient of our proof will be the following lemma.
\begin{lemma}
\label{lem:const}
Let $\D$ be any family of dependencies and let $\phi(\tuple x)$ be a $\FO(=\!\!(\cdot), \D)$ formula. Then $\phi(\tuple x)$ is equivalent to some formula of the form $\exists \tuple v (=\!\!(\tuple v) \wedge \psi(\tuple x, \tuple v))$, where $\psi \in \FO(\D)$ contains exactly the same instances of $\mathbf D$-atoms (for all $\mathbf D \in \D$) that $\phi$ does, and in the same number.
\end{lemma}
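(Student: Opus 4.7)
The plan is to prove the lemma by structural induction on $\phi$, producing at each step a formula in the stated shape $\exists \tuple v(=\!\!(\tuple v) \wedge \psi)$ with $\psi \in \FO(\D)$. The conservation of the multiset of $\mathbf D$-atom occurrences comes for free: every rewriting I use only rearranges first-order connectives, quantifiers and equality atoms around the $\mathbf D$-atoms, without ever duplicating or deleting them.

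The base cases are immediate. A first-order literal or a $\mathbf D\tuple y$-atom with $\mathbf D \in \D$ is already in the required shape, taking $\tuple v$ to be the empty tuple (so that $\exists \tuple v(=\!\!(\tuple v) \wedge \psi)$ reads simply as $\psi$). A constancy atom $=\!\!(\tuple y)$ is equivalent to $\exists \tuple v(=\!\!(\tuple v) \wedge \tuple v = \tuple y)$ for a fresh $\tuple v$ of length $|\tuple y|$: the conjunct $=\!\!(\tuple v)$ forces the choice function to collapse to a common singleton $\{\tuple c\}$ on the whole team, and then $\tuple v = \tuple y$ forces $s(\tuple y) = \tuple c$ for every $s$, which is exactly constancy of $\tuple y$.

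For the inductive step I assume $\phi_i \equiv \exists \tuple v_i(=\!\!(\tuple v_i) \wedge \psi_i)$ for $i=1,2$, with fresh disjoint $\tuple v_1, \tuple v_2$, and then establish four rewriting equivalences, one per connective: $\phi_1 \wedge \phi_2 \equiv \exists \tuple v_1\tuple v_2(=\!\!(\tuple v_1\tuple v_2) \wedge \psi_1 \wedge \psi_2)$, $\phi_1 \vee \phi_2 \equiv \exists \tuple v_1\tuple v_2(=\!\!(\tuple v_1\tuple v_2) \wedge (\psi_1 \vee \psi_2))$, $\exists u\,\phi_1 \equiv \exists \tuple v_1(=\!\!(\tuple v_1) \wedge \exists u\,\psi_1)$, and $\forall u\,\phi_1 \equiv \exists \tuple v_1(=\!\!(\tuple v_1) \wedge \forall u\,\psi_1)$. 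The common ingredients are: the identity $=\!\!(\tuple v_1) \wedge =\!\!(\tuple v_2) \equiv =\!\!(\tuple v_1\tuple v_2)$; the fact that a witness for $\exists \tuple v_i$ under $=\!\!(\tuple v_i)$ must collapse to a single tuple $\tuple c_i$ common to all assignments, so that the existential is really just a parameter choice; and Theorem \ref{thm:local} (locality), which lets me introduce or ignore the fresh variables $\tuple v_i$ when evaluating the opposite $\psi_j$.

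I expect the $\forall$-case to be the main obstacle, since in team semantics $\forall u \exists \tuple v \alpha \equiv \exists \tuple v \forall u\, \alpha$ is in general false. What saves it here is precisely the conjunct $=\!\!(\tuple v)$: any witness $H$ for $\exists \tuple v$ on $X[M/u]$ must send every assignment to the same tuple $\tuple c$, so $\tuple c$ can be chosen before $u$ is introduced and the two quantifiers swapped without loss. The disjunction case is almost as delicate: going forward, I split $X = Y \cup Z$ with $Y \models \phi_1$ and $Z \models \phi_2$, pick independent constants $\tuple c_1$ for $Y$ and $\tuple c_2$ for $Z$, and use locality together with the freshness of $\tuple v_j$ from $FV(\psi_{3-j})$ to conclude $Y[\tuple c_1\tuple c_2/\tuple v_1\tuple v_2] \models \psi_1$ and symmetrically for $\psi_2$; going back, the constancy on the right-hand side pins $\tuple v_1\tuple v_2$ to a single $(\tuple c_1,\tuple c_2)$ across the extended team, so any split of the extended team projects back to a compatible split of $X$. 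Iterating the four rewrites from the leaves of $\phi$ upwards then yields the desired normal form.
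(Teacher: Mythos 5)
Your proof is correct and follows essentially the same route as the paper, which only sketches the argument by reference to the analogous induction in Galliani's earlier work: a structural induction that pulls a constancy-guarded existential prefix outward through each connective, with the conjunct $=\!\!(\tuple v)$ reducing the team-semantic existential to a single parameter choice so that it commutes with $\vee$ and $\forall$. Your identification of the universal-quantifier case as the crux, and its resolution via the constancy of the witness, is exactly the key point.
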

The proof of this lemma is by induction on $\phi$, and it is entirely analogous to the corresponding proof from \cite{galliani12}.

Now we can prove Theorem \ref{thm:upwards_const}. \\
\begin{proof}
Let $\phi(\tuple x)$ be a $\FO(=\!\!(\cdot), \D)$-formula. Then by Lemma \ref{lem:const} $\phi(\tuple x)$ is equivalent to some sentence of the form $\exists \tuple v (=\!\!(\tuple v) \wedge \psi(\tuple x, \tuple v))$, where $\psi(\tuple x, \tuple v) \in \FO(\D)$. But then by Theorem \ref{thm:upwards} there exists a first-order formula $\psi^*(S)$ such that $M \models_X \psi(\tuple x, \tuple v)$ if and only if $M, X(\tuple x \tuple v) \models \psi^*(S)$. Now let $\theta(R, \tuple v)$ be obtained from $\psi^*(S)$ by replacing any $S \tuple y \tuple z$ with $R \tuple y \wedge \tuple z = \tuple v$. Since $X[\tuple m /\tuple v](\tuple x \tuple v) = \{\tuple a \tuple m : \tuple a \in X(\tuple x)\}$ it is easy to see that $M \models_X \exists \tuple v (=\!\!(\tuple v) \wedge \psi(\tuple x, \tuple v))$ if and only if $M, X(\tuple x) \models \exists v \theta(R, \tuple v)$, and this concludes the proof.
\end{proof}
\section{Possibility, Negated Inclusion and Negated Conditional Independence} 
\label{sect:negdep}
By Corollary \ref{coro:nexfu}, the negations of exclusion and functional dependence atoms can be added to first-order logic without increasing its power. But what about the negations of inclusion and (conditional) independence? These are of course first-order definable, but they are not upwards closed: indeed, their semantic rules can be given as
\begin{description}
\item[TS-$\not \subseteq$:] $M \models_X \tuple x \not \subseteq \tuple y$ if and only if there is a $s \in X$ such that for all $s' \in X$, $s(\tuple x) \not = s'(\tuple y)$; 
\item[TS-$\not \!\!\bot_c$:] $M \models_X \nindepc{\tuple x}{\tuple y}{\tuple z}$ if and only if there are $s, s' \in X$ with $s(\tuple z) = s'(\tuple z)$ and such that for all $s'' \in X$, $s''(\tuple x \tuple z) \not = s(\tuple x \tuple z)$ or $s''(\tuple y \tuple z) \not = s(\tuple y \tuple z)$.
\end{description}
However, we will now prove that, nonetheless, $\FO(\not =\!\!(\cdot, \cdot), \not \subseteq, \nmid, \not \!\!\bot_c)$ is equivalent to $\FO$ on the level of sentences. In order to do so, let us first define the following \emph{possibility operator} and prove that it is uniformly definable in $\FO(=\!\!(\cdot), \not = \!\!(\cdot))$: 
\begin{defin}
Let $\phi$ be any $\FO(\D)$ formula, for any choice of $\D$. Then 
\begin{description}
\item[TS-$\Diamond$:] $M \models_X \Diamond \phi$ if there exists a $Y \subseteq X$, $Y \not = \emptyset$, such that $M \models_Y \phi$.
\end{description}
\end{defin}
\begin{lemma}
Let $\phi$ be any $\FO(\D)$ formula, for any $\D$. Then $\Diamond\phi$ is logically equivalent to 
\begin{equation}
\label{eq2}
\exists u_0 u_1 \exists v (=\!\!(u_0) \wedge =\!\!(u_1) \wedge (v = u_0 \vee v = u_1) \wedge (\phi \upharpoonright v = u_1) \wedge \not =\!\!(v)).
\end{equation}
\end{lemma}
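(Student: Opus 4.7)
The plan is to unpack what each conjunct of the candidate formula (\ref{eq2}) forces, and then show that its truth on $X$ is equivalent to finding a witnessing nonempty subteam. The intuition is that $u_0,u_1$ pick out two elements of the model (forced distinct by the interaction of the last two conjuncts), the variable $v$ labels each assignment of $X$ with one of these two constants, the label $u_1$ carves out the witnessing subteam for $\phi$, and the inconstancy of $v$ guarantees that both labels are actually used (hence the two constants differ and the $u_1$-subteam is a \emph{proper} option).

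For the direction ``$\Diamond\phi \Rightarrow$ (\ref{eq2})'', I would suppose $Y \subseteq X$ is nonempty with $M \models_Y \phi$ and pick two distinct elements $a_0, a_1 \in \dom(M)$ (which exist by our standing assumption that models have at least two elements). I define $H_0 \colon s \mapsto \{a_0\}$ and $H_1 \colon s \mapsto \{a_1\}$, so that after the first two existentials $u_0, u_1$ take the constant values $a_0, a_1$, satisfying both constancy atoms. For $H$ witnessing $\exists v$, I set $H(s) = \{a_1\}$ for $s \in Y$ and $H(s) = \{a_0\}$ for $s \in X \setminus Y$; if $Y = X$, I simply pick one $s^* \in X$ and replace $H(s^*)$ by $\{a_0,a_1\}$, which is legal in lax semantics. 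The resulting team $X'$ splits as $Y_0 \cup Y_1$ where $Y_i = \{s \in X' : s(v) = a_i\}$, satisfying $(v = u_0) \vee (v = u_1)$; both values $a_0, a_1$ occur as $v$-values, so $\not =\!\!(v)$ holds; and the subteam where $v = u_1$ is, modulo the added variables, exactly $Y$, so by locality (Theorem \ref{thm:local}) and Lemma \ref{lem:FOrest} we get $M \models_{X'} (\phi \upharpoonright v = u_1)$.

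For the converse direction ``(\ref{eq2}) $\Rightarrow \Diamond\phi$'', I would assume $M \models_X$ of the given formula, obtain the team $X'$ extending $X$ with $u_0, u_1, v$, let $a_0, a_1$ be the constant values of $u_0, u_1$ on $X'$. The disjunction $v = u_0 \vee v = u_1$ forces every assignment to have $v$-value in $\{a_0, a_1\}$, and $\not =\!\!(v)$ forces both values to occur, whence $a_0 \neq a_1$. By Lemma \ref{lem:FOrest}, the subteam $Z = \{s \in X' : s(v) = a_1\}$ satisfies $\phi$, and it is nonempty because $a_1$ is realized. Projecting $Z$ back onto the domain of $X$ yields a nonempty $Y \subseteq X$, and locality (Theorem \ref{thm:local}) gives $M \models_Y \phi$, so $M \models_X \Diamond \phi$.

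The only nontrivial technical point is handling the edge case $Y = X$ in the forward direction: a naive choice would make $v$ constantly $a_1$ and falsify $\not =\!\!(v)$. Using the lax rule to let a single assignment branch on both $v$-values is the cleanest fix, and it does not disturb the $u_1$-subteam's agreement with $Y$ on the original variables, so locality still delivers $\phi$. Everything else is bookkeeping about how existentials, constancy atoms, and the abbreviation $(\phi \upharpoonright \theta)$ interact via the lemmas already established.
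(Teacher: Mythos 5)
Your proof is correct and follows essentially the same argument as the paper's: introduce two constant values via $u_0,u_1$, use $v$ to label each assignment, carve out the witnessing subteam with $(\phi \upharpoonright v = u_1)$, and read off nonemptiness (and the distinctness of the two values) from $\not =\!\!(v)$ together with $v = u_0 \vee v = u_1$. The only cosmetic difference is in the forward direction: the paper sets $H(s) = \{0,1\}$ for every $s \in Y$ (and $\{0\}$ off $Y$), which makes $\not =\!\!(v)$ hold automatically from $Y \neq \emptyset$ and renders your separate patch for the case $Y = X$ unnecessary.
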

\begin{proof}
Suppose that there is a $Y \subseteq X$, $Y \not = \emptyset$, such that $M \models_Y \phi$. Then let $0, 1 \in \dom(M)$ be such that $0 \not = 1$, let $H: X[01/u_0u_1] \rightarrow \parts(\dom(M)) \backslash \{\emptyset\}$ be such that 
\[
	H(s[01/u_0u_1]) = \left\{\begin{array}{l l}
		\{0, 1\} & \mbox{ if } s \in Y;\\
		\{0\} & \mbox{ if } s \in X \backslash Y
	\end{array}\right.
\]
and let $Z = X[01/u_0u_1][H/v]$. Clearly $M \models_Z =\!\!(u_0) \wedge =\!\!(u_1) \wedge (v = u_0 \vee v = u_1) \wedge (\phi \upharpoonright v = u_1)$, and it remains to show that $M \models_Z \not =\!\!(v)$. But by hypothesis $Y$ is nonempty, and therefore there exists a $s \in Y \subseteq X$ such that $\{s[010/u_0 u_1 v], s[011/u_0u_1v]\} \subseteq Z$. So $v$ is not constant in $Z$, as required, and $X$ satisfies (\ref{eq2}).

Conversely, suppose that $X$ satisfies (\ref{eq2}), let $0$ and $1$ be our choices for $u_0$ and $u_1$, and let $H$ be the choice function for $v$. Then let $Y = \{s \in X : 1 \in H(s[01/u_0u_1])\}$. By locality, Lemma \ref{lem:FOrest} and the fact that $M \models_{X[01H/u_1 u_2 v]} (\phi \upharpoonright v = u_1)$ we have that $M \models_Y \phi$; and $Y$ is nonempty, since\\$M \models_Z (v = u_0 \vee v = u_1) \wedge \not = \!\!(v)$. 
\end{proof}
It is now easy to see that the negations of inclusion and conditional independence are in $\FO(=\!\!(\cdot), \not =\!\!(\cdot))$:
\begin{propo}
For all $\tuple x$, $\tuple y$ with $|\tuple x| = |\tuple y|$, $\tuple x \not \subseteq \tuple y$ is logically equivalent to 
\[
	\exists \tuple z (=\!\!(\tuple z) \wedge \Diamond(\tuple z = \tuple x) \wedge \tuple z \not = \tuple y).
\]
\end{propo}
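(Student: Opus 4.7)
The plan is to prove the two directions separately, exploiting the fact that the constancy atom $=\!\!(\tuple z)$ forces $\tuple z$ to take a single fixed value $\tuple m$ across the extended team, so that $\Diamond(\tuple z = \tuple x)$ witnesses ``$\tuple m \in X(\tuple x)$'' and $\tuple z \not = \tuple y$ witnesses ``$\tuple m \notin X(\tuple y)$''. Together these are precisely what $\tuple x \not\subseteq \tuple y$ asserts.

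For the forward direction, assume $M \models_X \tuple x \not\subseteq \tuple y$ and fix a witness $s_0 \in X$ such that $s_0(\tuple x) \neq s'(\tuple y)$ for all $s' \in X$. Let $\tuple m = s_0(\tuple x)$, and choose $H : X \to \parts(\dom(M)^{|\tuple z|}) \setminus \{\emptyset\}$ with $H(s) = \{\tuple m\}$ for every $s \in X$, so that in $Z := X[H/\tuple z]$ the tuple $\tuple z$ is assigned the constant value $\tuple m$. Then $M \models_Z =\!\!(\tuple z)$ is immediate; the subteam $Y = \{s \in Z : s(\tuple x) = \tuple m\}$ is nonempty (it contains the extension of $s_0$) and satisfies $\tuple z = \tuple x$, which gives $M \models_Z \Diamond(\tuple z = \tuple x)$; and the literal $\tuple z \not = \tuple y$ holds at every $s \in Z$ since $s(\tuple z) = \tuple m = s_0(\tuple x) \neq s(\tuple y)$ by our choice of $s_0$.

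For the converse, suppose $M \models_X \exists \tuple z (=\!\!(\tuple z) \wedge \Diamond(\tuple z = \tuple x) \wedge \tuple z \not = \tuple y)$, and let $H$ be the witnessing choice function, $Z = X[H/\tuple z]$. By $=\!\!(\tuple z)$ there is a single tuple $\tuple m$ such that every assignment in $Z$ maps $\tuple z$ to $\tuple m$. By \textbf{TS-$\Diamond$} there exists a nonempty $Y \subseteq Z$ with $M \models_Y \tuple z = \tuple x$, so some $s \in Y$ satisfies $s(\tuple x) = \tuple m$; let $s_0 \in X$ be the underlying assignment (the restriction of $s$ to the domain of $X$), so that $s_0(\tuple x) = \tuple m$. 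Finally, $\tuple z \not = \tuple y$ holding throughout $Z$ gives $\tuple m \neq s'(\tuple y)$ for every $s' \in X$, so $s_0$ witnesses $M \models_X \tuple x \not\subseteq \tuple y$.

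I do not anticipate a serious obstacle: both directions are immediate once one unpacks the satisfaction clauses for $=\!\!(\cdot)$ and $\Diamond$ and appeals to locality (Theorem \ref{thm:local}) to restrict attention to the relevant free variables. The only point requiring a small amount of care is keeping track of which team ($X$, $Z$, or $Y$) one is working on, in particular observing that the first-order literal $\tuple z \not = \tuple y$ is evaluated pointwise on the entire extended team $Z$ by \textbf{TS-lit}, which is exactly what gives the universal ``for all $s'\in X$'' in the definition of $\tuple x \not\subseteq \tuple y$.
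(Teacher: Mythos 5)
Your proof is correct and is exactly the routine verification the paper leaves implicit (the proposition appears there without proof): the constancy atom pins $\tuple z$ to one value $\tuple m$, $\Diamond(\tuple z = \tuple x)$ certifies $\tuple m \in X(\tuple x)$, and the pointwise evaluation of $\tuple z \not= \tuple y$ certifies $\tuple m \neq s'(\tuple y)$ for all $s' \in X$, which is precisely \textbf{TS-$\not\subseteq$}. Two cosmetic remarks only: in the converse, note first that $\Diamond(\tuple z = \tuple x)$ forces $Z \neq \emptyset$ before extracting the constant value $\tuple m$, and the pointwise reading of the tuple inequality $\tuple z \not= \tuple y$ (a disjunction of literals, not a single literal) is justified by flatness, i.e.\ Proposition \ref{propo:flat}, rather than by \textbf{TS-lit} alone.
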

\begin{propo}
For all $\tuple x$, $\tuple y$ and $\tuple z$, $\nindepc{\tuple x}{\tuple y}{\tuple z}$ is logically equivalent to 
\[
	\exists \tuple p \tuple q \tuple r (=\!\!(\tuple p \tuple q \tuple r) \wedge \Diamond(\tuple p \tuple r = \tuple x \tuple z) \wedge \Diamond(\tuple q \tuple r = \tuple y \tuple z) \wedge \tuple p \tuple q \tuple r \not = \tuple x \tuple y \tuple z).
\]
\end{propo}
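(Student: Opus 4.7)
The plan is to unfold the right-hand side into explicit conditions on assignments in $X$ and match these against the semantic definition of $\nindepc{\tuple x}{\tuple y}{\tuple z}$. Recall that $M \models_X \nindepc{\tuple x}{\tuple y}{\tuple z}$ amounts to the existence of $s_1, s_2 \in X$ with a common $\tuple z$-value $\tuple m$ such that, setting $\tuple a = s_1(\tuple x)$ and $\tuple b = s_2(\tuple y)$, no $s \in X$ satisfies $s(\tuple x \tuple y \tuple z) = (\tuple a, \tuple b, \tuple m)$. The formula on the right is engineered to introduce exactly $\tuple a, \tuple b, \tuple m$ as the constant values of $\tuple p, \tuple q, \tuple r$.

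For the forward direction, I would use the witnesses $\tuple a, \tuple b, \tuple m$ supplied by $\nindepc{\tuple x}{\tuple y}{\tuple z}$ to define the constant choice function $H(s) = \{(\tuple a, \tuple b, \tuple m)\}$ witnessing $\exists \tuple p \tuple q \tuple r$. On $X' = X[H/\tuple p \tuple q \tuple r]$, the constancy atom $=\!\!(\tuple p \tuple q \tuple r)$ is immediate; the atom $\Diamond(\tuple p \tuple r = \tuple x \tuple z)$ is witnessed by the nonempty singleton subteam arising from $s_1$, since $s_1(\tuple x \tuple z) = (\tuple a, \tuple m)$ and the $\tuple p \tuple r$-values are frozen at $(\tuple a, \tuple m)$; the atom $\Diamond(\tuple q \tuple r = \tuple y \tuple z)$ is witnessed analogously by the extension of $s_2$; and the first-order formula $\tuple p \tuple q \tuple r \not = \tuple x \tuple y \tuple z$ holds on $X'$ by Proposition \ref{propo:flat}, since by assumption no $s \in X$ satisfies $s(\tuple x \tuple y \tuple z) = (\tuple a, \tuple b, \tuple m)$.

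For the converse, suppose $X$ satisfies the right-hand side via some choice function $H$, and let $X' = X[H/\tuple p \tuple q \tuple r]$. The constancy conjunct forces a single triple $(\tuple a, \tuple b, \tuple m)$ to be taken by $\tuple p \tuple q \tuple r$ across the whole of $X'$, so each $\Diamond$-conjunct reduces to a condition on $X$ itself: the first produces $s_1 \in X$ with $s_1(\tuple x \tuple z) = (\tuple a, \tuple m)$, the second produces $s_2 \in X$ with $s_2(\tuple y \tuple z) = (\tuple b, \tuple m)$, and in particular $s_1(\tuple z) = \tuple m = s_2(\tuple z)$. By flatness, the literal $\tuple p \tuple q \tuple r \not = \tuple x \tuple y \tuple z$ rules out any $s \in X$ with $s(\tuple x \tuple y \tuple z) = (\tuple a, \tuple b, \tuple m)$, so $s_1, s_2$ are exactly the witnesses required by the definition of $\nindepc{\tuple x}{\tuple y}{\tuple z}$.

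There is no real obstacle here beyond the bookkeeping. The one point that deserves a sentence of justification is that the two $\Diamond$-atoms may be re-read as conditions on $X$ itself: this is immediate once one observes that constancy freezes the values of $\tuple p, \tuple q, \tuple r$ across $X'$, after which both the $\Diamond$-equalities and the final inequality literal speak purely about the restriction of $X'$ to its original variables. The overall structure parallels the preceding proposition on $\tuple x \not \subseteq \tuple y$, with an extra tuple $\tuple r$ playing the role of the common $\tuple z$-value.
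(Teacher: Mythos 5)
Your proof is correct and is essentially the argument the paper intends (the paper states this proposition without proof, as an easy consequence of the semantics of $\Diamond$ and constancy): you instantiate $\tuple p\tuple q\tuple r$ with the constant triple $(\tuple a,\tuple b,\tuple m)$ coming from the witnesses of $\nindepc{\tuple x}{\tuple y}{\tuple z}$, and conversely read the two $\Diamond$-conjuncts and the flat inequality literal back as conditions on $X$. The bookkeeping, including the observation that constancy lets the conjuncts be read off $X$ itself and the implicit handling of the empty team (where both sides fail), is sound.
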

\begin{coro}
Every sentence of $\FO(\not =\!\!(\cdot, \cdot), \not \subseteq, \nmid, \not \!\! \bot_c)$ is equivalent to some sentence of \\$\FO(=\!\!(\cdot), \not =\!\!(\cdot, \cdot), \nmid)$, and hence to some first-order sentence.
\end{coro}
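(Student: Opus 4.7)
The plan is to assemble the pieces that the preceding three results provide. First I would note that inconstancy $\not=\!\!(\tuple x)$ is subsumed by non-dependence $\not=\!\!(\cdot,\cdot)$: taking the first tuple to be empty in the definition of non-dependence gives exactly ``there exist $s, s' \in X$ with $s(\tuple x) \neq s'(\tuple x)$'', i.e.\ $|X(\tuple x)| > 1$. Hence $\FO(=\!\!(\cdot), \not=\!\!(\cdot))$ is a sublogic of $\FO(=\!\!(\cdot), \not=\!\!(\cdot,\cdot))$, and a fortiori the defining formula for $\Diamond$ from the lemma above lives in $\FO(=\!\!(\cdot), \not=\!\!(\cdot,\cdot))$.

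Next, given a sentence $\phi$ of $\FO(\not=\!\!(\cdot,\cdot), \not\subseteq, \nmid, \not\!\!\bot_c)$, I would eliminate each occurrence of a $\not\subseteq$ atom and each occurrence of a $\not\!\!\bot_c$ atom by substituting in place of it the equivalent formula supplied by the two Propositions immediately above, further unfolding every $\Diamond$ using the lemma. By the compositionality of team semantics together with locality (Theorem~\ref{thm:local}), replacing subformulas by logically equivalent ones preserves logical equivalence of the whole sentence. The resulting sentence $\phi'$ is thus logically equivalent to $\phi$ and belongs to $\FO(=\!\!(\cdot), \not=\!\!(\cdot,\cdot), \nmid)$, establishing the first half of the corollary.

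For the second half, I would verify that $\not=\!\!(\cdot,\cdot)$ and $\nmid$ are both upwards closed and first-order definable: any witnessing pair $s, s'$ for non-dependence or for non-exclusion inside a team $X$ remains a witnessing pair in every superteam $Y \supseteq X$, and the defining sentences $\exists \tuple x\tuple y\tuple z (R\tuple x\tuple y \wedge R\tuple x\tuple z \wedge \tuple y \neq \tuple z)$ and $\exists \tuple x\tuple y (R\tuple x \wedge R\tuple y \wedge \tuple x = \tuple y)$ (suitably rephrased for the two-sort encoding of $\nmid$) are manifestly first order. Then Theorem~\ref{thm:upwards_const}, applied with $\D = \{\not=\!\!(\cdot,\cdot), \nmid\}$, converts $\phi'$ to a first-order sentence, completing the proof.

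There is no real obstacle: the only point requiring any care is checking that the substitution step genuinely produces a sentence of the target logic and preserves meaning inside arbitrary contexts, which follows immediately from locality and the fact that $\Diamond$ and the restriction operator $(\cdot \upharpoonright \cdot)$ are defined by purely compositional team-semantic rules.
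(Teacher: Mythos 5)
Your proof is correct and follows exactly the assembly the paper intends (the corollary is stated without explicit proof precisely because it is this combination of the $\Diamond$-lemma, the two propositions, and Theorem \ref{thm:upwards_const} applied to the upwards closed, first-order definable dependencies $\not=\!\!(\cdot,\cdot)$ and $\nmid$). The only cosmetic slip is your displayed defining sentence for non-exclusion, which as written is not quite the right first-order description of the $2k$-ary relation $X(\tuple x \tuple y)$ (it should say $\exists \tuple x \tuple y \tuple u \tuple v\,(R\tuple x\tuple y \wedge R\tuple u\tuple v \wedge \tuple x = \tuple v)$), but you flag the needed rephrasing yourself and the argument is unaffected.
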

\section{Bounded Dependencies and Totality}
\label{sect:bounded}
Now that we know something about upwards closed dependencies, it would be useful to classify them in different categories and prove \emph{non-definability} results between the corresponding extensions of first-order logic. As a first such classification, we introduce the following property:
\begin{defin}[Boundedness]
Let $\kappa$ be a (finite or infinite) cardinal. A dependency condition $\mathbf D$ is $\kappa$-\emph{bounded} if whenever $M \models_X \mathbf D \tuple x$ there exists a $Y \subseteq X$ with $|Y| \leq \kappa$ such that $M \models_Y \mathbf D \tuple x$.

We say that $\mathbf D$ is \emph{bounded} if it is $\kappa$-bounded for some $\kappa$.\footnote{After a fashion, this notion of boundedness may be thought of as a dual of the notion of \emph{coherence} of \cite{kontinen_ja13}.}
\end{defin}
For example, non-emptiness and intersection are $1$-bounded; inconstancy and the negations of functional dependence and exclusion are $2$-bounded; and for all finite or infinite $\kappa$, $\kappa$-bigness is $\kappa$-bounded. However, totality is not bounded at all. Indeed, for any $\kappa$ consider a model $M$ of cardinality greater than $\kappa$ and take the team $X=\{\emptyset\}[M/x]$. Then $M \models_X \all(x)$, but if $Y \subseteq X$ has cardinality $\leq \kappa$ then $Y(x) \subsetneq \dom(M)$ and $M \not \models_Y \all(x)$.

As we will now see, the property of boundedness is preserved by the connectives of our language. 
\begin{defin}[Height of a formula]
Let $\D$ be any family of bounded dependencies. Then for all formulas $\phi \in \FO(\D)$, the \emph{height} $\height(\phi)$ of $\phi$ is defined as follows:
\begin{enumerate}
\item If $\phi$ is a first-order literal then $\height(\phi) = 0$; 
\item If $\phi$ is a functional dependence atom $\mathbf D \tuple x$ then $\height(\phi)$ is the least cardinal $\kappa$ such that $\mathbf D$ is $\kappa$-bounded; 
\item If $\phi$ is of the form $\psi_1 \vee \psi_2$ or $\psi_1 \wedge \psi_2$ then $\height(\phi) = \height(\psi_1) + \height(\psi_2)$;
\item If $\phi$ is of the form $\exists v \psi$ or $\forall v \psi$. then $\height(\phi) = \height(\psi)$. 
\end{enumerate}
\end{defin}
In other words, the height of a formula is the sum of the heights of all instances of dependency atoms occurring in it. 
\begin{theo}
\label{thm:height}
Let $\D$ be a family of bounded upwards closed dependencies. Then for all formulas $\phi \in \FO(\D)$
\[
	M \models_X \phi \Rightarrow \exists Y \subseteq X \mbox{ with } |Y| \leq \height(\phi) \mbox{ s.t. } M \models_Y \phi.
\]
\end{theo}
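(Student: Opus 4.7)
The plan is to proceed by structural induction on $\phi \in \FO(\D)$. The two base cases are immediate: a first-order literal has height $0$ and is satisfied by the empty team (vacuously, by \textbf{TS-lit}), so $Y = \emptyset$ works; and for a dependency atom $\mathbf{D}\tuple{x}$, the conclusion is exactly the definition of $\kappa$-boundedness applied with $\kappa = \height(\mathbf{D}\tuple{x})$.

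The disjunction case $\phi = \psi_1 \vee \psi_2$ is also easy: split $X = X_1 \cup X_2$ with $M \models_{X_i} \psi_i$ via \textbf{TS-$\vee$}, apply the induction hypothesis to each side to obtain $Y_i \subseteq X_i$ with $|Y_i| \leq \height(\psi_i)$ and $M \models_{Y_i} \psi_i$, and take $Y = Y_1 \cup Y_2$. Cardinalities sum correctly and $Y$ witnesses the disjunction.

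The conjunction case $\phi = \psi_1 \wedge \psi_2$ is the main obstacle. The naive construction is again $Y = Y_1 \cup Y_2$ with $Y_i$ from the induction hypothesis, but a priori we only know $M \models_{Y_i} \psi_i$, not $M \models_Y \psi_i$, and formulas of $\FO(\D)$ need not be upwards closed even when all dependencies in $\D$ are. The rescue comes from Theorem \ref{thm:upflat}: since $M \models_X \psi_1$, Lemma \ref{lem:flat_imp} gives $M \models_X \psi_1^f$, and since $\psi_1^f$ is first order, Proposition \ref{propo:flat} gives $M \models_Y \psi_1^f$; then $Y_1 \subseteq Y$ and $M \models_{Y_1} \psi_1$ together with Theorem \ref{thm:upflat} yield $M \models_Y \psi_1$, and symmetrically $M \models_Y \psi_2$.

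For the quantifier cases, the existential is essentially a projection argument. Given $M \models_X \exists v \psi$ via some $H : X \to \parts(\dom(M)) \setminus \{\emptyset\}$, induction applied to $X[H/v]$ yields $Z \subseteq X[H/v]$ with $|Z| \leq \height(\psi) = \height(\phi)$ and $M \models_Z \psi$; set $Y = \{s \in X : \exists m \in H(s),\, s[m/v] \in Z\}$ and $H'(s) = \{m \in H(s) : s[m/v] \in Z\}$, so that $Y[H'/v] = Z$ and $|Y| \leq |Z|$. For $\forall v \psi$, induction on $X[M/v]$ gives some small $Z$ with $M \models_Z \psi$; let $Y = \{s \in X : \exists m,\, s[m/v] \in Z\}$. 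Then $|Y| \leq |Z|$ and $Z \subseteq Y[M/v] \subseteq X[M/v]$, and since $M \models_{X[M/v]} \psi$ implies $M \models_{Y[M/v]} \psi^f$ by Lemma \ref{lem:flat_imp} and Proposition \ref{propo:flat}, Theorem \ref{thm:upflat} promotes $M \models_Z \psi$ to $M \models_{Y[M/v]} \psi$, and hence $M \models_Y \forall v \psi$. The recurring trick throughout is the same: find a minimal certificate at the atoms via boundedness, then use the restricted upwards closure of Theorem \ref{thm:upflat} to enlarge it to any intermediate subteam on which the flattening still holds.
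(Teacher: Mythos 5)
Your argument follows the paper's proof almost step for step: the same induction, the same handling of literals, atoms and disjunction, and the same use of Lemma \ref{lem:flat_imp}, Proposition \ref{propo:flat} and Theorem \ref{thm:upflat} to rescue the conjunction (and universal) cases. Your existential case is in fact marginally cleaner than the paper's: by shrinking $H$ to $H'$ you arrange $Y[H'/v]=Z$ exactly, so no appeal to Theorem \ref{thm:upflat} is needed there, whereas the paper keeps the original $H$ and must promote $M\models_Z\psi$ to $M\models_{Y[H/v]}\psi$.

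There is, however, one genuine (though easily repaired) flaw in both quantifier cases: the claim $|Y|\leq|Z|$ does not follow from your definition of $Y$. You take $Y$ to be the set of \emph{all} $s\in X$ admitting a witness $m$ with $s[m/v]\in Z$; but if $v$ already belongs to the domain of $X$, distinct assignments of $X$ differing only in $v$ are collapsed by $s\mapsto s[m/v]$, so a single element of $Z$ can have many preimages and $|Y|$ may strictly exceed $|Z|$ (take $X=\{s_1,s_2\}$ with $s_1,s_2$ differing only on $v$ and $H(s_1)=H(s_2)=\{m\}$: then $X[H/v]$ is a singleton, and if $Z$ is that singleton your $Y$ is all of $X$). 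Since the theorem is stated for arbitrary teams $X$, this case cannot be excluded, and the bound $|Y|\leq\height(\phi)$ is precisely what is at stake. The paper avoids the problem by choosing, via functions $\mathfrak f$ and $\mathfrak g$, exactly \emph{one} preimage in $X$ for each $h\in Z$, which gives $|Y|\leq|Z|$ by construction; the same fix works verbatim in your argument (and your identity $Y[H'/v]=Z$ survives it), after which your proof is sound.
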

\begin{proof}
The proof is by induction on $\phi$. 
\begin{enumerate}
\item If $\phi$ is a first-order literal then $\height(\phi) = 0$ and it is always the case that $M \models_\emptyset \phi$, as required.
\item If $\phi$ is an atom $\mathbf D \tuple x$ then the statement follows at once from the definitions of boundedness and height.
\item If $\phi$ is a disjunction $\psi_1 \vee \psi_2$ then $\height(\phi) = \height(\psi_1) + \height(\psi_2)$. Suppose now that $M \models_X \psi_1 \vee \psi_2$: then $X = X_1 \cup X_2$ for two $X_1$ and $X_2$ such that $M \models_{X_1} \psi_1$ and $M \models_{X_2} \psi_2$. This implies that there exist $Y_1 \subseteq X_1$, $Y_2 \subseteq X_2$ such that $M \models_{Y_1} \psi_1$ and $M \models_{Y_2} \psi_2$, $|Y_1| \leq \height(\psi_1)$ and $|Y_2| \leq \height(\psi_2)$. But then $Y = Y_1 \cup Y_2$ satisfies $\psi_1 \vee \psi_2$ and has at most $\height(\psi_1) + \height(\psi_2)$ elements. 
\item If $\phi$ is a conjunction $\psi_1 \wedge \psi_2$ then, again, $\height(\phi) = \height(\psi_1) + \height(\psi_2)$. Suppose that $M \models_X \psi_1 \wedge \psi_2$: then $M \models_X \psi_1$ and $M \models_X \psi_2$, and therefore by Lemma \ref{lem:flat_imp} $M \models_X \psi_1^f$ and $M \models_X \psi_2^f$; and, by induction hypothesis, there exist $Y_1, Y_2 \subseteq X$ with $|Y_1| \leq \height(\psi_1)$, $|Y_2| \leq \height(\psi_2)$, $M \models_{Y_1} \psi_1$ and $M \models_{Y_2} \psi_2$. Now let $Y = Y_1 \cup Y_2$: since $Y \subseteq X$, by Proposition \ref{propo:flat} $M \models_Y \psi_1^f$ and $M \models_Y \psi_2^f$. But $Y_1, Y_2 \subseteq Y$, and therefore by Theorem \ref{thm:upflat} $M \models_Y \psi_1$ and $M \models_Y \psi_2$, and in conclusion $M \models_Y \psi_1 \wedge \psi_2$.
\item If $\phi$ is of the form $\exists v \psi$ then $\height(\phi) = \height(\psi)$. Suppose that $M \models_X \exists v \psi$: then for some $H$ we have that $M \models_{X[H/v]} \psi$, and therefore by induction hypothesis there exists a $Z \subseteq X[H/v]$ with $|Z| \leq \height(\psi)$ such that $M \models_{Z} \psi$. For any $h \in Z$, let $\mathfrak f (h)$ be a $s \in X$ such that $h \in s[H/v] = \{s[m/v] : m \in H(s)\}$,\footnote{Since $Z \subseteq X[H/v]$, such a $s$ always exists. Of course, there may be multiple ones; in that case, we pick one arbitrarily.} and let $Y = \{\mathfrak f(h) : h \in Z\}$. Now $Z \subseteq Y[H/v] \subseteq X[H/v]$. Since $M \models_{X[H/v]} \psi^f$ and $Y[H/v] \subseteq X[H/v]$, we have that $M \models_{Y[H/v]} \psi^f$; and since $M \models_{Z} \psi$, this implies that $M \models_{Y[H/v]} \psi$ and that $M \models_Y \exists v \psi$. Furthermore $|Y| = |Z| \leq \height(\psi)$, as required.
\item If $\phi$ is of the form $\forall v \psi$ then, again, $\height(\phi) = \height(\psi)$. Suppose that $M \models_{X[M/v]} \psi$: again, by induction hypothesis there is a $Z \subseteq X[M/v]$ with $|Z| \leq \height(\psi)$ and such that $M \models_{Z} \psi$. For any $h \in Y$, let $\mathfrak g (h)$ pick some $s \in X$ which agrees with $h$ on all variables except $v$, and let $Y = \{\mathfrak g(h) : h \in Z\}$. Similarly to the previous case, $Z \subseteq Y[M/v] \subseteq X[M/v]$: therefore, since $M \models_{X[M/v]} \psi^f$ we have that $M \models_{Y[M/v]} \psi^f$, and since $M \models_{Z} \psi$ we have that $M \models_{Y[M/v]} \psi$. So in conclusion $M \models_Y \forall v \psi$, as required, and $|Y| = |Z| \leq n$. 
\end{enumerate}
\end{proof}
Even though constancy atoms are not upwards closed, it is possible to extend this result to $\FO(=\!\!(\cdot), \D)$. Indeed, constancy atoms are trivially $0$-bounded, since the empty team always satisfies them, and 
\begin{coro}
Let $\D$ be a family of upwards closed bounded dependencies. Then for all $\phi \in \FO(=\!\!(\cdot), \D)$
\[
	M \models_X \phi \Rightarrow \exists Y \subseteq X \mbox{ with } |Y| \leq \height(\phi) \mbox{ s.t. } M \models_Y \phi.
\]
\end{coro}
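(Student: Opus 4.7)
The plan is to reduce the claim for $\FO(=\!\!(\cdot), \D)$ to the upwards-closed case already handled by Theorem \ref{thm:height}. By Lemma \ref{lem:const}, the formula $\phi(\tuple x)$ is equivalent to one of the form $\exists \tuple v (=\!\!(\tuple v) \wedge \psi(\tuple x, \tuple v))$ with $\psi \in \FO(\D)$, and crucially $\psi$ contains exactly the same instances of $\mathbf{D}$-atoms that $\phi$ does. Since constancy atoms are trivially $0$-bounded and therefore contribute $0$ to height, this atom-preservation clause yields $\height(\psi) = \height(\phi)$, which will be the bound we need.

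Now suppose $M \models_X \phi$. Unpacking the normal form, there is a function $H : X \to \parts(\dom(M)^{|\tuple v|}) \backslash \{\emptyset\}$ such that $M \models_{X[H/\tuple v]} =\!\!(\tuple v) \wedge \psi$, so in particular all assignments in $X[H/\tuple v]$ give $\tuple v$ a common value $\tuple m$. Applying Theorem \ref{thm:height} to $\psi$ on the team $X[H/\tuple v]$ produces a subteam $Z \subseteq X[H/\tuple v]$ with $|Z| \leq \height(\psi)$ and $M \models_Z \psi$; constancy is preserved by restriction, so $M \models_Z =\!\!(\tuple v)$ holds as well.

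It remains to pull $Z$ back to a subteam $Y$ of $X$ of the same cardinality. For each $h \in Z$ pick some $\mathfrak{f}(h) \in X$ with $h = \mathfrak{f}(h)[\tuple m / \tuple v]$; this is possible because every $h \in Z \subseteq X[H/\tuple v]$ arises from such an $s$ with $\tuple m \in H(s)$. Put $Y = \{\mathfrak{f}(h) : h \in Z\}$ and $K(s) = \{\tuple m\}$ for every $s \in Y$. A direct check gives $Y[K/\tuple v] = Z$, whence $M \models_{Y[K/\tuple v]} =\!\!(\tuple v) \wedge \psi$ and therefore $M \models_Y \exists \tuple v (=\!\!(\tuple v) \wedge \psi)$, i.e.\ $M \models_Y \phi$, with $|Y| \leq |Z| \leq \height(\psi) = \height(\phi)$.

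The only real obstacle is bookkeeping: because the statement of the corollary refers to $\height(\phi)$ but the argument runs through the rewriting of Lemma \ref{lem:const}, one must verify that the two heights agree. This is exactly what the atom-preservation clause of Lemma \ref{lem:const} combined with the $0$-boundedness of constancy accomplishes, letting us import the bound from Theorem \ref{thm:height} rather than re-running its induction directly on $\FO(=\!\!(\cdot), \D)$ formulas -- a route that would otherwise require treating constancy as a genuine case of the induction, where upwards closure fails.
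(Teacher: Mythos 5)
Your proof is correct and follows essentially the same route as the paper's: rewrite $\phi$ via Lemma \ref{lem:const} as $\exists \tuple v (=\!\!(\tuple v) \wedge \psi)$, observe that $\height(\psi) = \height(\phi)$ since constancy atoms are $0$-bounded, apply Theorem \ref{thm:height} to $\psi$ on $X[\tuple m/\tuple v]$, and pull the small subteam back to a subteam of $X$ of the same size. The only difference is that you spell out the pullback with an explicit choice function where the paper simply asserts that $Z$ must have the form $Y[\tuple m/\tuple v]$; this is a matter of detail, not of substance.
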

\begin{proof}
Let $\phi \in \FO(=\!\!(\cdot), \D)$: then by Lemma \ref{lem:const} $\phi$ is equivalent to some formula of the form $\exists \tuple v (=\!\!(\tuple v) \wedge \psi)$, where $\psi$ does not contain constancy atoms and $\height(\psi) = \height(\phi)$. Now suppose that $M \models_X \phi$: then, for some choice of elements $\tuple m \in \dom(M)^{|\tuple v|}$, $M \models_{X[\tuple m/\tuple v]} \psi$. Now by Theorem \ref{thm:height} there exists a $Z \subseteq X[\tuple m / \tuple v]$, with $|Z| \leq \height(\psi)$, such that $M \models_{Z} \psi$; and $Z$ is necessarily of the form $Y[\tuple m / \tuple v]$ for some $Y \subseteq X$ with $|Y| = |Z| \leq \height(\psi)$. But then $M \models_Y \exists \tuple v (=\!\!(\tuple v) \wedge \psi)$, as required.
\end{proof}
This result allows us to prove at once a number of nondefinability results concerning upwards closed dependencies. For example, it is now easy to see that 
\begin{coro}
Let $\D$ be a family of upwards closed bounded dependencies. Then the totality dependency $\all$ is not definable in $\FO(=\!\!(\cdot), \D)$. In particular, totality atoms cannot be defined by means of the negations of inclusion, exclusion, functional dependence and independence atoms.
\end{coro}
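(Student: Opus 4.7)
\bigskip

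\noindent\textbf{Proof plan.} The first (main) clause is an almost immediate application of the preceding corollary. Suppose, toward a contradiction, that some formula $\phi(x) \in \FO(=\!\!(\cdot), \D)$ defines the totality atom $\all(x)$, i.e.\ $M \models_X \phi(x) \Leftrightarrow M \models_X \all(x)$ for all $M$ and $X$. Let $\kappa = \height(\phi)$; since $\D$ consists of bounded dependencies, $\kappa$ is a fixed cardinal depending only on $\phi$. Now choose a model $M$ with $|\dom(M)| > \kappa$ and set $X = \{\emptyset\}[M/x]$, so that $X(x) = \dom(M)$ and hence $M \models_X \all(x)$, thus $M \models_X \phi(x)$. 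By the preceding corollary, there is a subteam $Y \subseteq X$ with $|Y| \leq \kappa$ such that $M \models_Y \phi(x)$. But then $|Y(x)| \leq |Y| \leq \kappa < |\dom(M)|$, so $Y(x) \subsetneq \dom(M)$ and $M \not\models_Y \all(x)$, contradicting the assumed equivalence.

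For the ``in particular'' clause I need only verify that the logic $\FO(\not=\!\!(\cdot,\cdot), \not\subseteq, \nmid, \not\!\!\bot_c)$ fits under the hypothesis of the main statement. The atoms $\not=\!\!(\tuple x, \tuple y)$ and $\tuple x \nmid \tuple y$ are themselves upwards closed and $2$-bounded (as noted just after the definition of boundedness), and the same holds for inconstancy $\not=\!\!(\tuple x)$. The negated inclusion and negated conditional independence atoms are not upwards closed, but the propositions of Section~\ref{sect:negdep} show that every instance of $\tuple x \not\subseteq \tuple y$ or $\nindepc{\tuple x}{\tuple y}{\tuple z}$ is logically equivalent to a $\FO(=\!\!(\cdot), \not=\!\!(\cdot), \nmid, \not=\!\!(\cdot,\cdot))$ formula built with the $\Diamond$ operator. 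Note that (unconditional) $\not\!\!\bot$ is just the special case of $\not\!\!\bot_c$ with empty $\tuple z$, so it is likewise subsumed.

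Therefore every sentence of $\FO(\not=\!\!(\cdot,\cdot), \not\subseteq, \nmid, \not\!\!\bot_c)$ is equivalent to one of $\FO(=\!\!(\cdot), \D_0)$, where $\D_0 = \{\not=\!\!(\cdot,\cdot), \nmid, \not=\!\!(\cdot)\}$ is a family of upwards closed bounded dependencies. The first clause of the corollary then rules out a definition of $\all$ in this logic, yielding the stated non-definability result.

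The argument is essentially a single appeal to the preceding corollary; the only place where care is required is the ``in particular'' part, where one must invoke the translations from Section~\ref{sect:negdep} in order to push the non-upwards-closed negated atoms back into an upwards closed bounded framework (with constancy) to which the main clause applies.
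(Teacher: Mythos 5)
Your proof is correct and follows exactly the route the paper intends: the main clause is the preceding corollary applied to the unbounded dependency $\all$ (using the same witness team $\{\emptyset\}[M/x]$ over a model of cardinality exceeding $\height(\phi)$ that the paper uses to show $\all$ is unbounded), and the paper leaves this as ``easy to see.'' Your handling of the ``in particular'' clause---routing the non-upwards-closed atoms $\not\subseteq$ and $\not\!\!\bot_c$ through the Section~\ref{sect:negdep} translations into $\FO(=\!\!(\cdot),\not=\!\!(\cdot),\not=\!\!(\cdot,\cdot),\nmid)$ before applying the main clause---is precisely the detail the paper omits, and it is worth noting only that this equivalence holds at the level of formulas (as the cited propositions guarantee), not merely sentences, which is what the non-definability of the atom $\all$ requires.
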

\begin{coro}
Let $\D$ be a family of $\kappa$-bounded upwards closed dependencies and let $\kappa' > \kappa$ be infinite. Then $\kappa'$-bigness is not definable in $\FO(=\!\!(\cdot), \D)$.
\end{coro}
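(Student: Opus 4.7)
The plan is to argue by contradiction via the immediately preceding corollary, which says that any team satisfying a formula $\phi \in \FO(=\!\!(\cdot),\D)$ has a subteam of cardinality at most $\height(\phi)$ that still satisfies $\phi$. Suppose toward a contradiction that the $\kappa'$-bigness atom $|\tuple x| \geq \kappa'$ is logically equivalent to some formula $\phi(\tuple x) \in \FO(=\!\!(\cdot),\D)$. First I would bound $\height(\phi)$: the formula $\phi$ contains only finitely many instances of atoms from $\D$, each of which contributes at most $\kappa$ to the height (with the constancy atoms contributing $0$). When $\kappa$ is finite, $\height(\phi)$ is a finite sum of finite cardinals and hence finite; when $\kappa$ is infinite, the same sum is at most $n \cdot \kappa = \kappa$ for some $n \in \mathbb N$. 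Since $\kappa'$ is assumed infinite and strictly greater than $\kappa$, both cases give $\height(\phi) < \kappa'$.

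Next I would exhibit the witnessing team. Pick a model $M$ with $|\dom(M)| \geq \kappa'$, choose $A \subseteq \dom(M)^{|\tuple x|}$ of cardinality exactly $\kappa'$, and let $X$ be the team of assignments that range over $A$ on $\tuple x$ (one assignment per tuple of $A$). Then $X(\tuple x) = A$ has size $\kappa'$, so $M \models_X |\tuple x| \geq \kappa'$ and hence $M \models_X \phi$ by the supposed equivalence. Applying the corollary, there is a $Y \subseteq X$ with $|Y| \leq \height(\phi) < \kappa'$ and $M \models_Y \phi$. But then $|Y(\tuple x)| \leq |Y| < \kappa'$, so $M \not\models_Y |\tuple x| \geq \kappa'$, contradicting the equivalence.

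The only delicate point is the cardinal arithmetic in the first step, which is precisely where the hypothesis $\kappa' > \kappa$ and $\kappa'$ infinite does its work: if $\kappa'$ were merely finite and larger than $\kappa$, the finite sum of bounds coming from the atom instances in $\phi$ could exceed $\kappa'$ (take many copies of a $\kappa$-bounded atom), and the argument would collapse. Everything else is a direct, one-line appeal to the corollary.
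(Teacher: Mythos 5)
Your proof is correct and is exactly the argument the paper intends: it leaves this corollary as an immediate consequence of the preceding height-bound corollary, and your write-up just spells out the cardinal arithmetic ($\height(\phi)$ is a finite sum of cardinals $\leq \kappa$, hence $< \kappa'$ since $\kappa'$ is infinite and exceeds $\kappa$) together with the witnessing team. No gaps.
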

\begin{coro}
Let $\mathbf D$ be a $k$-bounded upwards closed dependency, and let $n > k$. If $\phi(\tuple x)$ of $\FO(=\!\!(\cdot), \mathbf D)$ characterizes $n$-bigness, in the sense that for all $M$ and $X$
\[
	M \models_X \phi(\tuple x) \Leftrightarrow |X(\tuple x)| \geq n,
\]
then $\phi(\tuple x)$ contains at least $\lceil \frac{n}{k} \rceil$ instances of $\mathbf D$. 
\end{coro}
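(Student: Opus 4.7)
My plan is to read off the corollary as a direct counting consequence of the preceding height bound. Let $m$ be the number of occurrences of $\mathbf{D}$ in $\phi(\tuple x)$, and write $\phi$ as a $\FO(=\!\!(\cdot), \mathbf D)$ formula. From the definition of $\height$, first-order literals contribute $0$, constancy atoms contribute $0$ (they are $0$-bounded, as observed just before the statement), conjunction and disjunction sum the heights of the subformulas, and quantifiers preserve them. Each of the $m$ instances of $\mathbf D$ contributes at most $k$, because $\mathbf D$ is $k$-bounded. Hence $\height(\phi) \leq mk$.

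Next I would pick a concrete witnessing team. Let $M$ be any model with $|\dom(M)| \geq n$ (such a model exists since $n > k \geq 1$), and choose $n$ distinct tuples $\tuple a_1, \ldots, \tuple a_n \in \dom(M)^{|\tuple x|}$. Let $X = \{ s_1, \ldots, s_n\}$ where $s_i$ sends $\tuple x$ to $\tuple a_i$, so that $|X| = n$ and $|X(\tuple x)| = n$. Since $\phi$ characterizes $n$-bigness, $M \models_X \phi(\tuple x)$.

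Now I would apply the corollary immediately preceding the statement (extending Theorem \ref{thm:height} to $\FO(=\!\!(\cdot), \D)$) to obtain a subteam $Y \subseteq X$ with $|Y| \leq \height(\phi) \leq mk$ and $M \models_Y \phi(\tuple x)$. Invoking the characterization of $n$-bigness once more, $|Y(\tuple x)| \geq n$. But $|Y(\tuple x)| \leq |Y| \leq mk$, so $n \leq mk$, i.e.\ $m \geq n/k$, and since $m$ is an integer, $m \geq \lceil n/k \rceil$, as required.

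There is essentially no obstacle: the argument is a one-line counting argument on top of the previous corollary, and the only thing to be careful about is the conventions defining $\height$ on $\FO(=\!\!(\cdot), \mathbf{D})$-formulas, namely that the $k$-boundedness of $\mathbf{D}$ and the $0$-boundedness of constancy atoms together ensure $\height(\phi) \leq mk$. It is also worth noting that the construction of $X$ requires a model of size at least $n$, which is harmless because the corollary demands the characterization to hold for \emph{all} $M$ and $X$.
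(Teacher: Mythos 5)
Your argument is correct and is exactly the intended one: the paper states this corollary without proof as an immediate consequence of the preceding corollary (the extension of Theorem \ref{thm:height} to $\FO(=\!\!(\cdot), \D)$), and your counting argument --- $\height(\phi) \leq mk$ since each of the $m$ instances of $\mathbf D$ contributes at most $k$ and constancy atoms contribute $0$, then shrinking a witnessing team of size $n$ to one of size $\leq mk$ and concluding $n \leq mk$ --- is precisely how that consequence is obtained. No issues.
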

\section{Conclusions and Further Work}
In this work we discovered a surprising asymmetry between downwards closed and upwards closed first-order definable dependency conditions: whereas, as it was known since \cite{vaananen07}, the former can bring the expressive power of a logic with team semantics beyond the first order, the latter cannot do so by their own or even together with constancy atoms. As a consequence, the negations of the principal dependency notions studied so far in team semantics can all be added to first-order logic without increasing its expressive power. 

Our original question was: how much can we get away with adding to the team semantics of first-order logic before ending up in a higher order logic? The answer, it is now apparent, is \emph{quite a lot}. This demonstrates that team semantics is useful not only (as it has been employed so far) as a formalism for the study of very expressive extensions of first-order logic, but also as one for that of more treatable ones.

Much of course remains to be done. The notion of boundedness of Section \ref{sect:bounded} allowed us to find some non-definability results between our extensions; but the classification of these extensions is far from complete. In particular, it would be interesting to find necessary and sufficient conditions for $\FO(\D)$ to be equivalent to $\FO$ over sentences. The complexity-theoretic properties of these logics, or of fragments thereof, also deserve further investigation.

Another open issue concerns the development of sound and complete proof systems for our logics. Of course, one can check whether a theory $T$ implies a formula $\phi$ simply by using Theorems \ref{thm:upwards} and \ref{thm:upwards_const} to translate everything in first-order logic and then use one of the many well-understood proof systems for it; but nonetheless, it could be very informative to find out directly which logical laws our formalisms obey. 
\paragraph{Acknowledgments}
The author thanks the referees for a number of useful suggestions and corrections. 
\bibliographystyle{eptcs}
\bibliography{biblio}
\end{document}